\title{Positive graphs}
\author{Omar Antol\'in Camarena, Endre Cs\'oka, Tam\'as Hubai,\\ G\'abor Lippner, L\'aszl\'o Lov\'asz}
\date{May 2012}
\newtheorem{Theorem}{Theorem}
\newtheorem{Lemma}[Theorem]{Lemma}
\newtheorem{Corollary}[Theorem]{Corollary}
\newtheorem{Proposition}[Theorem]{Proposition}
\newtheorem{Conjecture}[Theorem]{Conjecture}
\newtheorem{claim}{Claim}
\newcommand{\E}{{\sf E}}
\newcommand{\F}{\mathcal{F}}
\newcommand{\one}{\mathbbm1}
\renewcommand{\Pr}{{\sf P}}
\renewcommand{\P}{\mathcal{P}}
\newcommand{\Ki}{K_n}
\newcommand{\Kpm}{K_n^{\pm 1}}
\newcommand{\ddp}{\,{\rm d}p}
\newcommand{\dmu}{\,{\rm d}\mu}
\long\def\ignore#1{}
\def\FF{\mathcal{F}}
\def\NN{\mathcal{N}}
\def\RR{\mathcal{R}}
\def\Rbb{\mathbb{R}}
\begin{document}

\maketitle

\begin{abstract}
We study ``positive'' graphs that have a nonnegative homomorphism
number into every edge-weighted graph (where the edgeweights may be
negative). We conjecture that all positive graphs can be obtained by
taking two copies of an arbitrary simple graph and gluing them
together along an independent set of nodes. We prove the conjecture
for various classes of graphs including all trees. We prove a number
of properties of positive graphs, including the fact that they have a
homomorphic image which has at least half the original number of
nodes but in which every edge has an even number of pre-images. The
results, combined with a computer program, imply that the conjecture
is true for all graphs up to $9$ nodes.
\end{abstract}

\tableofcontents

\section{Problem description}

For a graph $G$ we are going to denote the set of its vertices by $V(G)$ and the set of its edges by $E(G)$, but may simply write $V$ and $E$ when the it is clear from the context which graph we are talking about.

Let $G$ and $H$ be two simple graphs. A {\it homomorphism} $G\to H$
is a map $V(G)\to V(H)$ that preserves adjacency. We denote by
$\hom(G,H)$ the number of homomorphisms $G\to H$. We extend this
definition to graphs $H$ whose edges are weighted by real numbers
$\beta_{ij} = \beta_{ji}$ ($i,j\in V(H)$):
\[
\hom(G,H)=\sum_{f:~V(G)\to V(H)}\prod_{ij\in E(H)}
\beta_{f(i)f(j)}.
\]
(One could extend it further by allowing nodeweights, and also by
allowing weights in $G$. Positive nodeweights in $H$ would not give
anything new; whether we get anything interesting through weighting
$G$ is not investigated in this paper.)

We call the graph $G$ {\it positive} if $\hom(G,H)\ge0$ for every
edge-weighted graph $H$ (where the edgeweights may be negative). It
would be interesting to characterize these graphs; in this paper we
offer a conjecture and line up supporting evidence.

We call a graph \emph{symmetric}, if its vertices can be partitioned
into three sets $(S, A, B)$ so that $S$ is an independent set, there
is no edge between $A$ and $B$, and there exists an isomorphism
between the subgraphs spanned by $S \cup A$ and $S \cup B$ which
fixes $S$.

\begin{Conjecture} \label{conj}
A graph $G$ is positive if and only if it is symmetric.
\end{Conjecture}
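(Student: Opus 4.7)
If $V(G)=S\sqcup A\sqcup B$ with $S$ independent, no $A$--$B$ edges, and an isomorphism $\sigma:G[S\cup A]\to G[S\cup B]$ fixing $S$ pointwise, then $E(G)=E(G[S\cup A])\sqcup E(G[S\cup B])$. For any edge-weighted $H$, summing over $\varphi:V(G)\to V(H)$ and grouping by the restriction $\psi=\varphi|_S$, the inner sums over extensions to $A$ and to $B$ are independent and equal to one another via $\sigma$, so
\[
\hom(G,H)=\sum_{\psi:S\to V(H)} f(\psi)^2 \ge 0,
\]
where $f(\psi)=\sum_{\chi:A\to V(H)}\prod_{ij\in E(G[S\cup A])}\beta_{(\psi\cup\chi)(i),(\psi\cup\chi)(j)}$. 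So the ``if'' direction is a sum-of-squares identity.

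\textbf{The hard direction: plan.} Assume $G$ is positive and try to build a symmetric decomposition. The natural starting point is the structural result advertised in the abstract: a homomorphism $\pi:G\to G'$ with $|V(G')|\ge |V(G)|/2$ in which every edge of $G'$ has an even number of preimages in $E(G)$. Let $S$ be the set of vertices of $G$ alone in their $\pi$-fiber; this set is automatically independent, for an edge within $S$ would be the unique preimage of its image in $G'$ and thus violate evenness. Two routes then suggest themselves. (a) \emph{Witness construction}: for a putative non-symmetric $G$, exhibit an edge-weighted $H$ (for instance a clever $\pm 1$ signing of an auxiliary graph) with $\hom(G,H)<0$; this is presumably the mechanism behind the computer verifications for small $n$ but seems hard to make uniform. (b) \emph{Structural route}: pair up the edges of $G$ lying in each fiber of $\pi$, and try to realize all these pairings simultaneously by a single vertex involution $\tau:V(G)\to V(G)$ fixing $S$ pointwise; the two $\tau$-orbits on $V(G)\setminus S$ then furnish $A$ and $B$, and $\tau$ itself gives the isomorphism $\sigma$.

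\textbf{The main obstacle.} The crux of route (b) is a local-to-global gluing problem: a fiberwise edge-pairing need not descend from a vertex involution, and there may be many pairings compatible with $\pi$ at each edge of $G'$. I would attack this by feeding additional edge-weighted test graphs $H$ into the positivity hypothesis so as to extract further equality and parity constraints on homomorphism counts---constraints on closed-walk and cycle statistics are a natural source---and iteratively refining $\pi$ by passing to quotients with progressively stronger even-multiplicity properties until a consistent involution is forced. I expect this rigidification step to be essentially as hard as the conjecture itself; the classes already treated in the paper (trees, small graphs) likely correspond precisely to situations where the involution can be read off directly, without the delicate iteration.
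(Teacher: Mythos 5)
The statement you were asked to prove is labeled \emph{Conjecture} in the paper, and the paper itself proves only the ``if'' direction: symmetric implies positive (Lemma~\ref{LEM:EASY}). The ``only if'' direction is open; the paper establishes it only for trees (Corollary~\ref{COR:TREES}) and, via computer search, for all graphs on at most $9$ vertices. So there is no complete proof to compare against.

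Your sum-of-squares argument for the ``if'' direction is correct and is essentially the paper's Lemma~\ref{LEM:EASY}, phrased for finite weighted graphs rather than kernels: group maps $V(G)\to V(H)$ by their restriction to $S$, observe that the contributions from the $A$-side and $B$-side factor and are equal by the isomorphism, and conclude $\hom(G,H)$ is a sum of squares. This is the same decomposition the paper uses under the integral sign. For the hard direction you correctly flag that you have a plan, not a proof. Your route via the even homomorphism of Theorem~\ref{evenhalf} is in the right spirit, and your observation that singleton fibers of an even homomorphism form an independent set is correct (a single-preimage edge would violate evenness). But the step you identify as the obstacle---upgrading fiberwise edge-pairings to a global vertex involution fixing $S$---is precisely what is not known, and the paper does not resolve it either; it stops at the structural consequence (Theorem~\ref{evenhalf}) plus ad hoc arguments for special classes. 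In short: your proposal is correct as far as it goes, honestly flags the gap, and the gap coincides with the actual open problem rather than with an oversight on your part.
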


There is an analytic definition for graph positivity which is
sometimes more convenient to work with. A \emph{kernel} is a
symmetric bounded measurable function $W : [0, 1]^2 \rightarrow\Rbb$. A map $p: V(G) \to [0,1]$ can be thought of as a homomorphism into $W$. It also naturally induces a map $p : E(G) \to [0,1]^2$. The
\emph{weight} of $p\in [0,1]^{V(G)}$ is then defined as
\begin{equation*}
\hom(G, W, p) = \prod_{e \in E} W\big(p(e)\big)= \prod_{(a, b) \in E}
W\big(p(a), p(b)\big).
\end{equation*}
The {\it homomorphism density} of a graph $G = (V, E)$ in a kernel
$W$ is defined as the expected weight of a random map:
\begin{equation} \label{tdef}
t(G, W) = \int\limits_{[0, 1]^V} \hom(G, W, p) \ddp = \int\limits_{[0,
1]^V} \prod_{e \in E} W\big(p(e)\big) \ddp.
\end{equation}
Graphs with real edge weights can be considered as kernels in a
natural way: Let $H$ be a looped-simple graph with edge weights
$\beta_{ij}$; assume that $V(H)=[n]=\{1,\dots,n\}$. Split the
interval $[0,1]$ into $n$ intervals $J_1,\dots,J_n$ of equal length,
and define
\[
W_H(x,y)=\beta_{ij}\quad\text{for}\quad x\in J_i,~y\in J_j.
\]
Then it is easy to check that for every simple graph $G$ and
edge-weighted graph $H$, we have $t(G,W_H)=t(G,H)$, where $t(G,H)$ is
a normalized version of homomorphism numbers between finite graphs:
\[
t(G,H)=\frac{\hom(G,H)}{|V(H)|^{|V(G)|}}.
\]
(For two simple graph $G$ and $H$, $t(G,H)$ is the probability that a
random map $V(G)\to V(H)$ is a homomorphism.)

It follows from the theory of graph limits \cite{BCLSV1,LSz1} that
positive graphs can be equivalently be defined by the property that
$t(G,W)\ge0$ for every kernel $W$. 

Hatami \cite{Hat} studied ``norming'' graphs $G$, for which the
functional $W\mapsto t(G,W)^{|E(G)|}$ is a norm on the space of
kernels. Positivity is clearly a necessary condition for this (it is
far from being sufficient, however). We don't know whether our
Conjecture can be proved for norming graphs.

\section{Results}

In this section, we state our results (and prove those with simpler
proofs). First, let us note that the ``if'' part of the conjecture is
easy.

\begin{Lemma}\label{LEM:EASY}
If a graph $G$ is symmetric, then it is positive.
\end{Lemma}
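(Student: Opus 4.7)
The plan is to fix the values of $p$ on $S$ and exploit the partition structure of $G$. Since $S$ is independent and there are no edges between $A$ and $B$, the edges of $G$ split into three groups: edges in $G[S\cup A]$, edges in $G[S\cup B]$, and no others. Therefore, writing $p_S = p|_S$, $p_A = p|_A$, $p_B = p|_B$, the integrand of \eqref{tdef} factors as
\begin{equation*}
\hom(G,W,p) = \left(\prod_{e\in E(G[S\cup A])} W(p(e))\right)\left(\prod_{e\in E(G[S\cup B])} W(p(e))\right),
\end{equation*}
where the first factor depends only on $(p_S,p_A)$ and the second only on $(p_S,p_B)$.

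Next I would use Fubini to integrate $p_A$ and $p_B$ out independently, for each fixed $p_S$. Setting
\begin{equation*}
F(p_S) = \int_{[0,1]^A} \prod_{e\in E(G[S\cup A])} W(p(e))\, dp_A, \qquad G_B(p_S) = \int_{[0,1]^B} \prod_{e\in E(G[S\cup B])} W(p(e))\, dp_B,
\end{equation*}
we obtain $t(G,W) = \int_{[0,1]^S} F(p_S)\, G_B(p_S)\, dp_S$. The isomorphism $G[S\cup A]\to G[S\cup B]$ fixing $S$ provided by the hypothesis gives a measure-preserving relabelling of the $B$-coordinates that identifies the two integrands, so $F(p_S) = G_B(p_S)$ for every $p_S$.

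Consequently $t(G,W) = \int_{[0,1]^S} F(p_S)^2\, dp_S \ge 0$, proving positivity. By the equivalence recorded earlier between the analytic and combinatorial definitions of positivity, the same conclusion holds for edge-weighted target graphs $H$.

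There is no real obstacle here; the only things to check carefully are that the edge set of $G$ really is the disjoint union of $E(G[S\cup A])$ and $E(G[S\cup B])$ (which uses both that $S$ is independent and that $A$--$B$ has no edges), and that the isomorphism fixing $S$ yields $F = G_B$ pointwise, which follows because relabelling the $A$-coordinates via the isomorphism is a measure-preserving bijection of $[0,1]^A$ that matches the corresponding product over $E(G[S\cup B])$ term by term.
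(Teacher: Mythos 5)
Your proof is correct and follows essentially the same route as the paper's: split the edge set using that $S$ is independent and $A$--$B$ has no edges, integrate out $p_A$ and $p_B$ via Fubini for each fixed $p_S$, identify the two inner integrals using the isomorphism fixing $S$, and conclude positivity from an integral of a square. The paper merely leaves the identification of the two inner integrals implicit; you spell it out, which is fine.
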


\begin{proof}
For any map $p : V \to [0,1]$ and any subset $M \subset V$ let $p_M$ denote the restriction of $p$ to $M$. Let further $G[M]$ denote the subgraph of $G$ spanned by $M$.
\begin{equation*}
t(G, W) \mathop{=}^{\eqref{tdef}} \int\limits_{[0, 1]^V} \prod_{e \in
E} W\big(p(e)\big)\, \ddp = \int\limits_{[0, 1]^V} \Big(\prod_{e \in G[S
\cup A]} W\big(p(e)\big)\Big) \cdot \Big(\prod_{e \in G[S \cup B]}
W\big(p(e)\big)\Big)\, \ddp
\end{equation*}
\begin{equation*}
= \int\limits_{[0, 1]^S} \Big(\int\limits_{[0, 1]^A} \prod_{e \in G[S
\cup A]} W\big(p(e)\big) \ddp_A\Big) \cdot \Big(\int\limits_{[0, 1]^B}
\prod_{e \in G[S \cup B]} W\big(p(e)\big) \ddp_B\Big) \ddp_S
\end{equation*}
\begin{equation*}
= \int\limits_{[0, 1]^S} \Big(\int\limits_{[0, 1]^A} \prod_{e \in G[S
\cup A]} W\big(p(e)\big) \ddp_A\Big)^2 \ddp_S \ge = 0. \qedhere
\end{equation*}
\end{proof}

In the reverse direction, we only have partial results. We are going
to prove that the conjecture is true for trees (Corollary
\ref{COR:TREES}) and for all graphs up to $9$ nodes (see Section
\ref{SEC:COMP-RES}).

We state and prove a number of properties of positive graphs. Each of
these is of course satisfied by symmetric graphs.

\begin{Lemma}\label{LEM:evenedges}
If $G$ is positive, then $G$ has an even number of edges.
\end{Lemma}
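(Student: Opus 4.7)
The plan is to test positivity against the simplest possible kernel: the constant kernel $W \equiv -1$. Since $W$ takes the value $-1$ at every point of $[0,1]^2$, for any map $p\in[0,1]^{V(G)}$ the product $\prod_{e\in E} W(p(e))$ equals $(-1)^{|E(G)|}$, independent of $p$. Integrating over $[0,1]^V$ (a region of measure $1$) gives
\[
t(G,W) = (-1)^{|E(G)|}.
\]
If $G$ is positive then $t(G,W)\ge 0$, which forces $|E(G)|$ to be even.

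There is no real obstacle here; the only thing to check is that the constant kernel $W\equiv -1$ is allowed, which it is since kernels are only required to be symmetric, bounded and measurable. Equivalently, in the discrete formulation one can take $H$ to be a single vertex with a loop of weight $-1$, in which case the unique map $V(G)\to V(H)$ contributes $\prod_{ij\in E(G)}(-1)=(-1)^{|E(G)|}$ to $\hom(G,H)$, reaching the same conclusion.
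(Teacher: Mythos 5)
Your proof is exactly the paper's argument, just spelled out: the paper's one-liner ``Otherwise $t(G,-1)=-1$'' is precisely the test against the constant kernel $W\equiv -1$ (equivalently, the single looped vertex with weight $-1$) that you describe. Correct and essentially identical.
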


\begin{proof}
Otherwise, choosing $W$ to be the constant $-1$ kernel we get $t(G, W) = (-1)^{|E(G)|} = -1$.
\end{proof}

We call a homomorphism \emph{even} if the preimage of each edge is
has even cardinality.

\begin{Lemma}
If $G$ is positive, then there exists an even homomorphism of $G$
into itself.
\end{Lemma}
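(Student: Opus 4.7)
The plan is to build an edge-weighted graph $H_x$ for each sign pattern $x\in\{-1,+1\}^{E(G)}$, apply positivity to each $H_x$, and then recover even self-homomorphisms by averaging over $x$ (a $\pm 1$ Fourier/parity trick).

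Concretely, for each $x\in\{-1,+1\}^{E(G)}$ I would let $H_x$ be the edge-weighted graph on vertex set $V(G)$ with weight $x_{uv}$ on every edge $uv\in E(G)$ and weight $0$ on non-edges. For a map $\varphi\colon V(G)\to V(G)$, the product $\prod_{ab\in E(G)}\beta_{\varphi(a)\varphi(b)}$ vanishes unless $\varphi$ is a homomorphism $G\to G$ (otherwise some edge image is a non-edge, contributing $0$). When $\varphi$ is a homomorphism, the product equals $\prod_{uv\in E(G)} x_{uv}^{m_\varphi(u,v)}$, where $m_\varphi(u,v)=|\{ab\in E(G):\{\varphi(a),\varphi(b)\}=\{u,v\}\}|$ is the multiplicity of the edge $uv$ in the preimage.

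Next I would sum over $x$. Since the $x_{uv}$ are independent,
\[
\sum_{x\in\{\pm1\}^{E(G)}}\hom(G,H_x)=\sum_{\varphi\colon G\to G}\prod_{uv\in E(G)}\Bigl(\sum_{x_{uv}=\pm 1} x_{uv}^{m_\varphi(u,v)}\Bigr),
\]
and the inner sum equals $2$ when $m_\varphi(u,v)$ is even and $0$ otherwise. Thus the total sum equals $2^{|E(G)|}$ times the number of even homomorphisms $G\to G$. Now I invoke positivity: every $\hom(G,H_x)\ge 0$. The all-ones weighting gives $\hom(G,H_{\mathbf 1})=\hom(G,G)\ge 1$, since the identity map is a homomorphism. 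Hence the whole sum is at least $1$, which forces the number of even homomorphisms of $G$ into itself to be at least one.

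There isn't really a hard step here; the only thing to notice is that the $\pm 1$ weighting precisely picks out the parity of each edge's preimage, while the $0$ on non-edges restricts attention to genuine homomorphisms. Once this parameterization is written down, positivity of $G$ on each individual $H_x$ and the trivial lower bound coming from the identity at $x=\mathbf{1}$ finish the argument.
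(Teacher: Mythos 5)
Your proof is correct and takes essentially the same approach as the paper: the paper averages $\hom(G,H,\phi)$ over a random $\pm1$ edge-weighting of $G$ and a random map $\phi$, interchanges the expectations, and uses positivity plus the all-ones case, which is exactly your summation argument written in expectation notation.
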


\begin{proof}
Let $H$ be obtained from $G$ by assigning random $\pm 1$ weights to its edges, and
let $f$ be a random map $V(G)\to V(H)$. Then
$\E_f(\hom(G,H,f))=t(G,H)\ge 0$, and $t(G,H)>0$ if all weights
are 1, so $\E_H\E_f(\hom(G,H,f))>0$. Hence there is a $f$
for which $\E_H(\hom(G,H,f))>0$. But clearly
$\E_H(\hom(G,H,f))=0$ unless $f$ is an even homomorphism of $G$
into itself.
\end{proof}

Let $K_n$ denote the complete graph on the vertex set $[n]$, where
$n\ge|V(G)|$.

\begin{Theorem} \label{evenhalf}
If a graph $G$ is positive, then there exists an even homomorphism
$f:~G \rightarrow \Ki$ so that $\big|f(V(G))\big| \ge \frac{1}{2}
\big|V(G)\big|$.
\end{Theorem}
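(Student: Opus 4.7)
The plan is to emulate the random-weighting argument of the preceding lemma, but in the larger ambient $K_n$ (with $n\ge |V(G)|$), and to upgrade the conclusion using Fourier analysis on $\Fbb_2^{E(K_n)}$. Take $H=K_n$ with independent uniform $\pm1$ edge weights $\beta_{ij}$. By positivity, $\hom(G,H)\ge 0$ for every realization, and the all-$+1$ realization satisfies $\hom(G,K_n)=P(G,n)>0$ for $n\ge\chi(G)$, so $\E_H\hom(G,H)>0$. The standard parity computation identifies this expectation with $p(n):=\#\{\text{even homomorphisms }\phi:V(G)\to[n]\}$. Writing $p(n)=\sum_k (n)_k M_k$, where $M_k$ counts unordered partitions of $V(G)$ into $k$ classes that are independent in $G$ with every bipartite edge count between classes even, the conclusion is equivalent to showing that the polynomial $p(n)$ has degree at least $|V(G)|/2$.

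For each $S\subseteq E(K_n)$, let $c(S)$ be the number of graph homomorphisms $\phi:V(G)\to[n]$ whose odd-preimage edge set $S_\phi:=\{e'\in E(K_n):|\phi^{-1}(e')|\text{ odd}\}$ equals $S$. A direct calculation gives the Fourier coefficient $\widehat c(\alpha)=\hom(G,H_\alpha)$, where $H_\alpha$ is $K_n$ with weight $-1$ on edges in $\alpha$ and $+1$ elsewhere. Positivity then yields $\widehat c(\alpha)\ge 0$ for all $\alpha\in\Fbb_2^{E(K_n)}$, so $c$ is positive-definite, and the standard bound for PD functions on the Boolean cube gives
\[
p(n)\;=\;c(\emptyset)\;\ge\;c(S)\qquad\text{for every }S.
\]

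It therefore suffices to exhibit a single $S$ with $c(S)=\Omega(n^{\lceil|V(G)|/2\rceil})$. Starting from an even homomorphism $\phi_0:G\to G$ (provided by the preceding lemma), one can compose with all injections of $\phi_0(V(G))$ into $[n]$ to obtain $(n)_{k_0}$ even homomorphisms sharing the empty parity pattern, where $k_0:=|\phi_0(V(G))|$; if $k_0\ge|V(G)|/2$ we are done. The hard case, and the main obstacle, is $k_0<|V(G)|/2$: then some fiber of $\phi_0$ has size at least $3$, and we must either refine $\phi_0$ by splitting such a fiber --- which reduces to a linear-algebra problem over $\Fbb_2$ on the parity vectors of its vertices --- or, when a direct refinement is unavailable, apply positivity to a carefully chosen signed $K_n$ in order to produce many homomorphisms into a fresh pool of $[n]$-vertices sharing a common non-zero parity pattern $S$ with $c(S)=\Omega(n^{|V(G)|/2})$. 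Carrying out this refinement in full generality, and verifying that positivity indeed supplies the required family of parity-compatible maps, is the technical heart of the argument.
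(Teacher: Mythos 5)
Your reformulation of the target is correct: writing $p(n)$ for the number of even homomorphisms $G\to[n]$, the desired conclusion is exactly that $p(n)$ grows like $n^{k}$ with $k\ge |V(G)|/2$, i.e.\ that the polynomial $p$ has degree at least $|V(G)|/2$. Your Fourier-analytic observation is also correct and is genuinely different from what the paper does: for the function $c$ on $\Fbb_2^{E(K_n)}$ counting homomorphisms by parity pattern, the Fourier coefficients $\widehat c(\alpha)=\hom(G,H_\alpha)$ are nonnegative precisely by positivity of $G$, so $c$ is positive-definite and $p(n)=c(\emptyset)\ge c(S)$ for every $S$. This is a nice repackaging of the hypothesis.

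However, the proof has a genuine gap, and you acknowledge it. The inequality $c(\emptyset)\ge c(S)$ is an \emph{upper} bound on each $c(S)$; it does not help you \emph{construct} an $S$ with $c(S)=\Omega(n^{\lceil |V(G)|/2\rceil})$, which is what your plan needs. In the ``hard case'' $k_0<|V(G)|/2$, the problem of exhibiting such an $S$ is essentially as hard as the original problem: a homomorphism with a prescribed nonzero parity pattern $S$ pins down the image on $V(S)$, so counting such maps again reduces to counting (quotiented) even homomorphisms of a subgraph, and you have given no mechanism by which positivity would supply a large parity-compatible family. The paper avoids this issue entirely with a different key inequality. It sets $X=t(G,K_n^{\pm1})$, notes $X\ge 0$ by positivity, and applies H\"older's (Cauchy--Schwarz) inequality $\E(X)\E(X^3)\ge\E(X^2)^2$; via the multiplicativity $t(G^k,\cdot)=t(G,\cdot)^k$ and the estimate $\E\,t(G^k,K_n^{\pm1})=\Theta(n^{-p(G^k)})$, this yields $p(G)+p(G^3)\le 2p(G^2)$. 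The remaining work is purely combinatorial: a superadditivity lemma for the parameter $r(f)$ gives $\bar r(G^k)=k\bar r(G)$, and a doubling construction (taking two copies of a minimizing $f$ and relabeling the non-odd image points in the second copy to fresh vertices) gives the crucial identity $p(G^2)=\bar r(G^2)$. Chaining these forces $p(G)=\bar r(G)\le\tfrac12|V(G)|$, which is the conclusion. So while your positive-definiteness idea is an interesting alternative use of the hypothesis, the passage from it to the degree lower bound is precisely the missing technical heart, and the paper's H\"older-plus-doubling route is the machinery that actually closes that gap.
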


We will prove this theorem in Section \ref{proofevenhalf}.

There are certain operations on graphs that preserve symmetry. Every
such operation should also preserve positiveness. We are going to
prove three results of this kind; such results are also useful in
proving the conjecture for small graphs.

We need some basic properties of the homomorphism density function:
Let $G_1$ and $G_2$ be two simple graphs, and let $G_1G_2$ denote
their disjoint union. Then for every kernel $W$
\begin{equation}\label{EQ:PROD}
t(G_1G_2,W)=t(G_1,W)t(G_2,W).
\end{equation}
For two looped-simple graphs $G_1$ and $G_2$, we denote by $G_1\times
G_2$ their {\it categorical product}, defined by
\begin{align*}
V(G_1\times G_2)&=V(G_1)\times V(G_2),\\
E(G_1\times G_2)
&=\bigl\{\bigl((i_1,i_2),(j_1,j_2)\bigr):~(i_1,j_1)\in E(G_1),
(i_2,j_2)\in E(G_2)\bigr\}.
\end{align*}
We note that if at least one of $G_1$ and $G_2$ is simple (has no
loops) then so is the product. The quantity $t(G_1\times G_2,W)$
cannot be expressed as simply as \eqref{EQ:PROD}, but the following
formula will be good enough for us. For a kernel $W$ and
looped-simple graph $H$, let us define the function $W^H:~([0,1]^V)^2
\to \RR$ by
\begin{equation}\label{EQ:WFDEF}
W^H\bigl((x_1,\dots,x_k),(y_1,\dots,y_k)\bigr) = \prod_{(i,j)\in
E(H)} W(x_i,y_j)
\end{equation}
(every non-loop edge of $H$ contributes two factors in this product).
Then we have
\begin{equation}\label{EQ:W-EXP}
t(G\times H,W)=t(G,W^H).
\end{equation}

The following lemma implies that it is enough to prove the conjecture
for connected graphs.

\begin{Lemma}\label{LEM:DISCONN}
A graph $G$ is positive if and only if every connected graph that
occurs among the connected components of $G$ an odd number of times
is positive.
\end{Lemma}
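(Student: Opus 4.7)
The ``if'' direction is immediate from the product formula $t(G,W)=\prod_C t(C,W)^{m_C}$, where $C$ runs over isomorphism types of connected components with multiplicities $m_C$: factors with $m_C$ even are nonnegative as even powers, and factors with $m_C$ odd are nonnegative by hypothesis.

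For the ``only if'' direction I would work in two reductions. Let $G_{\mathrm{odd}}$ denote the disjoint union of one copy of each connected component with odd multiplicity. Writing $m_C=2k_C+\varepsilon_C$ with $\varepsilon_C\in\{0,1\}$ and setting $Q(W):=\prod_C t(C,W)^{k_C}$ yields
\[
t(G,W)=t(G_{\mathrm{odd}},W)\cdot Q(W)^2.
\]
Since $Q^2\ge 0$, positivity of $G$ forces $t(G_{\mathrm{odd}},W)\ge 0$ on the set $\{Q\ne 0\}$. Each $t(C,\cdot)$ is a nonzero polynomial on any fixed space of step-function kernels (it takes the value $1$ on the all-ones kernel), hence $\{Q\ne 0\}$ is open and dense in the kernel space; by continuity of $t(G_{\mathrm{odd}},\cdot)$ the inequality extends to every $W$, and $G_{\mathrm{odd}}$ is positive.

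It remains to show that if $G_{\mathrm{odd}}=D_1\sqcup\cdots\sqcup D_r$ with pairwise nonisomorphic connected $D_j$ is positive, then each $D_j$ is positive. Suppose not, say $t(D_1,W_0)<0$. I would try to produce $W$ with $t(D_1,W)<0$ and $t(D_j,W)>0$ for $j\ge 2$, contradicting positivity of $G_{\mathrm{odd}}$. A natural candidate is a two-block kernel: place a rescaled copy of $W_0$ on $[0,a]^2$, a positive constant $c$ on $[a,1]^2$, and $0$ across. By connectedness of each $D_j$,
\[
t(D_j,W)=a^{|V(D_j)|}\,t(D_j,W_0)+(1-a)^{|V(D_j)|}\,c^{|E(D_j)|}.
\]
Taking $a$ close to $1$ secures $t(D_1,W)<0$, while the second summand forces $t(D_j,W)>0$ for any $D_j$ with $t(D_j,W_0)\ge 0$.

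The main obstacle is the case where some $t(D_j,W_0)$ with $j\ge 2$ is also negative, since then the simple block kernel cannot by itself achieve the required sign pattern. To handle this I would appeal to the functional (indeed algebraic) independence of the densities $\{t(F,\cdot)\}$ over pairwise distinct connected $F$: on a large enough space of step-function kernels the map $W\mapsto(t(D_j,W))_{j=1}^r$ has Jacobian of full rank at a generic point, so its image contains an open subset of $\Rbb^r$ and in particular realises the sign pattern $(-,+,\dots,+)$. Concretely, one can first perturb $W_0$ generically (preserving the open condition $t(D_1,\cdot)<0$) so that all $t(D_j,W_0)$ are nonzero, then iterate the block construction, adding further blocks designed to flip one unwanted negative sign of $t(D_j,\cdot)$ at a time.
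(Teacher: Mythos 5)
Your ``if'' direction and the reduction to $G_{\mathrm{odd}}$ are fine; the paper makes the same reduction (``we may assume the components are different and non-positive'') without spelling out the density/continuity argument you give, so that part of your write-up is actually more careful than the original. The real divergence --- and the gap --- is in the last step, where you try to produce, for distinct connected $D_1,\dots,D_r$ with $D_1$ non-positive, a single kernel $W$ realising the sign pattern $t(D_1,W)<0$, $t(D_j,W)>0$ for $j\ge 2$.

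Two things go wrong there. First, ``algebraic independence'' of the $t(D_j,\cdot)$ (or full-rank Jacobian at a generic point) only gives that the image of $W\mapsto(t(D_j,W))_j$ contains \emph{some} nonempty open set; it does not follow that the image meets the particular open orthant $\{x_1<0,\ x_2,\dots,x_r>0\}$. (Dominant real polynomial maps need not have dense real image --- think of $x\mapsto x^2$.) Second, the iterated two-block fix does not close the gap. Setting $u=(1-a)/a$, keeping $t(D_1,\cdot)<0$ while flipping $t(D_2,\cdot)$ to positive via a fresh block amounts to solving
\begin{equation*}
u^{|V(D_1)|}c^{|E(D_1)|}<|t(D_1,W)|,\qquad u^{|V(D_2)|}c^{|E(D_2)|}>|t(D_2,W)|,
\end{equation*}
which is infeasible precisely when $(|V(D_1)|,|E(D_1)|)$ and $(|V(D_2)|,|E(D_2)|)$ are proportional (e.g.\ equal) and $|t(D_2,W)|\ge|t(D_1,W)|$ --- exactly the case you flagged as ``the main obstacle'' but did not resolve.

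The paper sidesteps the need to hit a prescribed sign pattern. It picks kernels $W_1,\dots,W_m$ with $t(G_i,W_i)<0$ and such that the ``profile'' vectors $(t(G_j,W_i))_i$ are pairwise distinct (achievable by a generic small perturbation using kernels that distinguish $G_i$ from $G_j$). Taking the kernel that is a direct sum of $k_i$ copies of each $W_i$ (with $W_0\equiv 1$), positivity of $G$ and the product formula give, after homogenising, that $\prod_{j=1}^m\ell_j(x)\ge0$ on the nonnegative orthant, where $\ell_j(x)=1+\sum_i x_i\,t(G_j,W_i)$. Each $\ell_j$ has a zero in the orthant (since $t(G_j,W_j)<0$), the $\ell_j$ are pairwise distinct affine forms with the same constant term (hence non-proportional), so there is an orthant point where exactly one $\ell_j$ vanishes; near it the product changes sign, a contradiction. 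This is a genuinely different device from yours --- it trades the hard ``realise $(-,+,\dots,+)$'' problem for an elementary observation about products of distinct affine forms --- and it avoids appealing to any Erd\H{o}s--Lov\'asz--Spencer-type independence theorem.
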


\begin{proof}
The ``if'' part is obvious by \eqref{EQ:PROD}. To prove the converse,
let $G_1,\dots,G_m$ be the connected components of a positive graph
$G$. We may assume that these connected components are different and non-positive, since omitting a positive component or two
isomorphic components does not change the positivity of $G$. We want to
show that $m=0$. Suppose that $m\ge1$.

\begin{claim}\label{CLAIM:1}
We can choose kernels $W_1,\dots,W_m$ so that $t(G_i,W_i)<0$ and
$t(G_i,W_j)\not= t(G_j,W_j)$ for $i\not=j$.
\end{claim}

For every $i$ there is a kernel $W_i$ such that $t(G_i,W_i)<0$, since
$G_i$ is not positive. Next we show that for every $i\not=j$ there is
a kernel $W_{ij}$ such that $t(G_i,W_{ij})\not= t(G_j,W_{ij})$. If
$|V(G_i)|\not=|V(G_j)|$ then the kernel $W_{ij}=\one(x,y\le1/2)$ does
the job, as in this case, due to the connectivity of the graphs, $t(G_i,W_{ij}) = (1/2)^{|V(G_i)|}$. So we may suppose that $|V(G_i)|=|V(G_j)|$. Then by \cite[Theorem 5.29]{lovaszbook} there is a simple
graph $H$ such that $\hom(G_i,H)\not= \hom(G_j,H)$, and hence we can
choose $W_{ij}=W_H$.

Let us denote $\underline{x} = (x_1,\dots,x_m)$ and define $W_j'(\underline{x})=W_j +\sum_{i\not=j} x_i W_{ij}$. Expanding the product in the definition of $t(-,-)$ one easily sees that $Q_j(\underline{x}) = t(G_i,W_j'(\underline{x}))$
$(i=1,\dots,m)$ are all different polynomials in the variables $\underline{x}$, and
hence their values are all different for a generic choice of $\underline{x}$.
If $\underline{x}$ is chosen close to $\underline{0}$, then $t(G_j,W_j'(\underline{x}))<0$, and hence
we can replace $W_j$ by $W_j'(\underline{x})$. This proves the Claim.

Let $W_0= 1$ denote the identically 1 kernel. For nonnegative integers
$k_0,\dots,k_m$, construct a kernel $W_{k_0,\dots,k_m}$ by arranging
$k_i$ rescaled copies of $W_i$ for each $i$ on the ``diagonal". Then
\[
t(G_1\dots G_m, W_{k_0,\dots,k_m}) \mathop{=}^{\eqref{EQ:PROD}}\left(\sum k_i\right)^{-\sum |V(G_j)|}\prod_{j=1}^m \Bigl(\sum_{i=0}^m k_i t(G_j,W_i)\Bigr).
\]
We know that this expression is nonnegative for every choice of the
$k_i$. Since the right hand side is homogeneous in $k_0,\dots,k_m$,
it follows that
\begin{equation}\label{EQ:GW1}
\prod_{j=1}^m \Bigl(1+\sum_{i=1}^m x_i t(G_j,W_i)\Bigr)\ge0
\end{equation}
for every $x_1,\dots,x_m\ge0$. But the $m$ linear forms
$\ell_j(x)=1+\sum_{i=1}^m x_i t(G_j,W_i)$ are different by the choice
of the $W_i$, and each of them vanishes on some point of the positive
orthant since $t(G_j,W_j)<0$. Hence there is a point $x\in\Rbb_+^m$
where the first linear form vanishes but the other forms do not. In a
small neighborhood of this point the product \eqref{EQ:GW1} changes
sign, which is a contradiction.
\end{proof}

\begin{Proposition}\label{PROP:PROD-POS}
If $G$ is a positive simple graph and $H$ is any looped-simple graph,
then $G\times H$ is positive.
\end{Proposition}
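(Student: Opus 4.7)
The plan is to use the product formula \eqref{EQ:W-EXP} directly: for any kernel $W$, we have $t(G\times H,W)=t(G,W^H)$, so if we can interpret $W^H$ as (i.e., transport it to) a genuine kernel on $[0,1]^2$, then positivity of $G$ immediately yields $t(G\times H,W)\ge 0$.

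First, I would verify that $W^H\colon\bigl([0,1]^{V(H)}\bigr)^2\to\Rbb$ has the right formal properties. Measurability and boundedness are automatic from \eqref{EQ:WFDEF} since $W$ is bounded and measurable and the product is finite. Symmetry of $W^H$ in its two arguments is the one point that actually uses the hypothesis that $H$ is looped-\emph{simple} (hence undirected): because $W$ itself is symmetric,
\[
W^H\bigl((y_1,\dots,y_k),(x_1,\dots,x_k)\bigr)=\prod_{(i,j)\in E(H)}W(y_i,x_j)=\prod_{(i,j)\in E(H)}W(x_j,y_i),
\]
and reindexing by $(i,j)\mapsto(j,i)$ (which is a bijection of $E(H)$ to itself in both the loop and non-loop cases, since each non-loop edge is listed twice) recovers $W^H\bigl((x_1,\dots,x_k),(y_1,\dots,y_k)\bigr)$.

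Next, I would fix a measure-preserving Borel isomorphism $\sigma\colon[0,1]\to[0,1]^{V(H)}$ and define a kernel $\widetilde W$ on $[0,1]^2$ by $\widetilde W(x,y)=W^H(\sigma(x),\sigma(y))$. Since $\sigma$ is measure-preserving and $W^H$ is bounded, measurable and symmetric, $\widetilde W$ is a kernel in the sense of the paper. Moreover, $t(G,\cdot)$ is just an integral of a product of kernel values over independent uniform coordinates, so it is invariant under such relabelings of the underlying probability space; hence $t(G,W^H)=t(G,\widetilde W)$.

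Putting it together: $t(G\times H,W)=t(G,W^H)=t(G,\widetilde W)\ge 0$ by the (analytic) positivity of $G$, so $G\times H$ is positive. The only nontrivial step is the symmetry check for $W^H$, and it is essentially bookkeeping once one notes that $H$ being looped-simple means its edge set is symmetric; everything else is a direct application of \eqref{EQ:W-EXP} and the measure-isomorphism trick.
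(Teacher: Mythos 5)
Your proof is correct and takes the same route as the paper, which dispatches the proposition in one line by citing \eqref{EQ:W-EXP}; you have simply spelled out the bookkeeping the paper leaves implicit, namely the symmetry check for $W^H$ (using that $E(H)$ is symmetric because $H$ is looped-simple) and the transport of $W^H$ to a kernel on $[0,1]^2$ via a measure-preserving isomorphism.
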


\begin{proof}
Immediate from \eqref{EQ:W-EXP}.
\end{proof}

Let $G(r)$ be the graph obtained from $G$ by replacing each node with
$r$ twins of it. Then $G(r)\cong G \times K_r^\circ$, where
$K_r^\circ$ is the complete $r$-graph with a loop added at every
node. Hence we get:

\begin{Corollary}\label{COR:BLOW-POS}
If $G$ is a positive simple graph, then so is $G(r)$ for every positive integer $r$.
\end{Corollary}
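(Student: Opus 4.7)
The plan is essentially to take at face value the observation already made in the excerpt and feed it into Proposition \ref{PROP:PROD-POS}; the only content is to verify the asserted isomorphism $G(r)\cong K_r^\circ\times G$.

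First I would unpack both sides. On the one hand, $G(r)$ has vertex set $V(G)\times[r]$ (with $(v,i)$ for $i=1,\dots,r$ playing the role of the $r$ twins of $v$), and $\{(v,i),(w,j)\}$ is an edge exactly when $vw\in E(G)$; in particular $i=j$ is allowed (twins of distinct adjacent vertices are adjacent), but $v=w$ is not (twins of the same vertex are not adjacent, since $G$ is simple with no loops). On the other hand, in $K_r^\circ\times G$ the vertex set is $[r]\times V(G)$, and a pair $\{(i,v),(j,w)\}$ is an edge iff $(i,j)\in E(K_r^\circ)$ and $(v,w)\in E(G)$; since $K_r^\circ$ has all $r^2$ pairs (including loops) as edges, the first condition is vacuous, and we recover exactly the edge set of $G(r)$. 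The map $(v,i)\mapsto(i,v)$ is the isomorphism.

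Next I would observe that the product is a simple graph (no loops), because although $K_r^\circ$ has loops, the factor $G$ does not — this is the parenthetical remark made right after the definition of the categorical product.

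Finally, since $G$ is a positive simple graph and $K_r^\circ$ is a looped-simple graph, Proposition \ref{PROP:PROD-POS} gives that $K_r^\circ\times G$ is positive, hence so is $G(r)$. There is no real obstacle: the entire content lies in recognizing the blow-up as a categorical product with $K_r^\circ$, after which the corollary is immediate.
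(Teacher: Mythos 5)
Your proof is correct and follows the paper's approach exactly: identify $G(r)$ with $K_r^\circ\times G$ and apply Proposition \ref{PROP:PROD-POS}. You merely spell out the verification of the isomorphism that the paper takes as immediate.
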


As a third result of this kind, we will show that the subgraph of a
positive graph spanned by nodes with a given degree is also positive
(Corollary \ref{COR:DEG}). This proof, however, is more technical and
is given in the next section. Unfortunately, these tools do not help
us much for regular graphs $G$.

\section{Subgraphs of positive graphs}

In this section we develop a technique to show that one can partition the vertices of a positive graph in a certain way so that subgraphs spanned by each part are also positive.  The main idea is to limit, over what maps $p : V \to [0,1]$ one has to average to check positivity. Using this idea recursively we can finally reduce to maps that take each partition to disjoint subsets of $[0,1]$. This in turn allows us to conclude positivity of the spanned subgraphs.

To this end, first we have to introduce the notion of $\mathcal{F}$-positivity. 
Let $G=(V,E)$ be a simple graph. For a measurable
subset $\F\subseteq [0, 1]^V$ and a bounded measurable weight
function $\omega:~[0,1]\to(0,\infty)$, we define
\begin{equation}\label{EQ:tdef2}
t(G, W, \omega, \F) = \int\limits_{p \in \F} \hom(G,W,\omega, p) \ddp, 
\end{equation}
where the \textit{weight} of a $p : V \to [0,1]$ is
\begin{equation}
\hom(G,W,\omega,p) = 
\prod_{v\in V}
\omega\bigl(p(v)\bigr) \prod_{e \in E} W\big(p(e)\big) 
\end{equation}
With the measure $\mu$ with density function $\omega$ (i.e.,
$\mu(X)=\int_X\omega$), we can write this as
\begin{equation}\label{EQ:tdef2a}
t(G, W, \omega, \F) = \int\limits_{\F}\prod_{e \in E} W\big(p(e)\big)
\dmu^V(p).
\end{equation}

We say that $G$ is \emph{$\F$-positive} if for every kernel $W$ and
function $\omega$ as above, we have $t(G, W, \omega, \F) \ge 0$. 
It is easy to see that $G$ is $[0,1]^V$-positive if and only if it is
positive.

We say that $\F_1,\F_2\subseteq[0,1]^V$ are \emph{equivalent} if
there exists a bijection $\varphi:~[0, 1] \rightarrow [0, 1]$ such that
both $\varphi$ and $\varphi^{-1}$ are measurable, and $p \in \F_1 \Leftrightarrow
\varphi(p) \in \F_2$, where $\varphi(p)(v) = \varphi \big(p(v)\big)$.

\begin{Lemma}
If $\F_1$ and $\F_2$ are equivalent, then $G$ is $\F_1$-positive if
and only if it is $\F_2$-positive.
\end{Lemma}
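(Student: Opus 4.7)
The plan is to establish the symmetry between the two $\F$-positivity conditions by pulling back all test data along $f$. Since $f$ is a measurable bijection with measurable inverse, the roles of $\F_1$ and $\F_2$ are interchangeable, so it suffices to show that $\F_1$-positivity implies $\F_2$-positivity. Given a kernel $W$ and weight $\omega:[0,1]\to(0,\infty)$ realizing an arbitrary test of $\F_2$-positivity, I would construct a kernel $W'$ and weight $\omega'$ so that $t(G,W,\omega,\F_2)=t(G,W',\omega',\F_1)$; nonnegativity of the left-hand side would then follow from the assumed $\F_1$-positivity.

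The natural choice is $W'(x,y):=W(f(x),f(y))$, which is symmetric, bounded, and measurable (as a composition of measurable maps), hence a kernel. The key step is the coordinatewise change of variables $p=f\circ q$: by the definition of equivalence this is a bijection $\F_1\to\F_2$, measurable in both directions. Under this substitution $\prod_{e\in E}W(p(e))$ transforms into $\prod_{e\in E}W'(q(e))$ and $\prod_{v\in V}\omega(p(v))$ into $\prod_{v\in V}\omega(f(q(v)))$, while Lebesgue measure on $[0,1]^V$ pulls back to the product measure $\nu^V$, where $\nu$ is the measure on $[0,1]$ defined by $\nu(A)=\lambda(f(A))$. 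Assuming $\nu$ has a bounded strictly positive density $J$ with respect to Lebesgue measure $\lambda$, I set $\omega'(x):=\omega(f(x))\cdot J(x)$, which is a bounded positive measurable function. Because the change of variables is coordinatewise, the Jacobian factorizes as $\prod_{v\in V}J(q(v))$ and combines cleanly with $\prod_{v\in V}\omega(f(q(v)))$ to yield $\prod_{v\in V}\omega'(q(v))$, so the formula $t(G,W,\omega,\F_2)=t(G,W',\omega',\F_1)$ holds.

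The main obstacle is justifying the Jacobian: one must know that $f$ and $f^{-1}$ send Lebesgue null sets to null sets and, more strongly, that the pushforward $\nu$ is absolutely continuous with respect to $\lambda$ with a bounded positive density $J$. This is the content of the ``measurable bijection'' hypothesis as it is used in the paper; in the applications that follow (measure-preserving coordinate changes and piecewise affine rearrangements of $[0,1]$), $J$ is either identically $1$ or piecewise constant, and the required bounds are immediate. Once the change of variables is justified, the equality $t(G,W,\omega,\F_2)=t(G,W',\omega',\F_1)\ge 0$ proves $\F_2$-positivity, and the converse is obtained by applying the same argument to $f^{-1}$.
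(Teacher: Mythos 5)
Your proof is essentially the same change-of-variables argument the paper uses: transport $W$ and $\omega$ along $f$ and substitute coordinatewise to move the integral from one box to the other. The substantive difference is that you make the Jacobian explicit, absorbing it into the transported weight $\omega'$, whereas the paper passes directly from $\int_{\F_2}\cdots\,d\mu_f^V$ to $\int_{\F_1}\cdots\,d\mu^V$ as if the coordinatewise substitution were automatically measure-compatible, without justifying that step. Yours is the more careful version, and the identity you derive, $t(G,W,\omega,\F_2)=t(G,W',\omega',\F_1)$ with $W'=W\circ(f\times f)$ and $\omega'=(\omega\circ f)\cdot J$, is correct once the density $J$ exists and has the required properties.

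One caveat worth flagging: the paper's stated hypothesis --- that $f$ is a bijection with $f$ and $f^{-1}$ both measurable --- does not by itself imply that the pushforward $\nu(A)=\lambda(f(A))$ is absolutely continuous with a bounded density bounded away from zero. A Borel isomorphism of $[0,1]$ can carry a Lebesgue-null set onto a set of positive measure (map a standard Cantor set to a fat Cantor set and extend by a Borel isomorphism of the complements). In such a case one of $\F_1,\F_2$ could be Lebesgue-null while the other is not, so one of the positivity conditions becomes vacuous and the lemma as literally stated would fail. Thus your sentence ``Assuming $\nu$ has a bounded strictly positive density $J$'' introduces a genuine additional hypothesis rather than merely unpacking the paper's; you correctly note that in the downstream applications (piecewise-affine, measure-preserving $f$ in Lemma~\ref{pos2}) the assumption is trivially satisfied, but it should really be built into the definition of equivalence.
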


\begin{proof}
Let $\varphi$ denote the bijection in the definition of the equivalence.
For a kernel $W$ and weight function $\omega$, define the kernel
$W^\varphi(x, y) = W\big(\varphi(x), \varphi(y)\big)$, and weight function
$\omega^\varphi(x)=\omega\bigl(\varphi(x)\bigr)$, and let $\mu$ and $\mu_\varphi$
denote the measures defined by $\omega$ and $\omega^\varphi$, respectively.
With this notation,
\begin{align*}
t(G, W^\varphi, \omega^\varphi, \F_2) &= \int\limits_{\F_2} \prod_{e \in E}
W^\varphi \big(p(e)\big) \dmu_\varphi^V(p)\\ &= \int\limits_{\F_1} \prod_{e \in E}
W\big(p(e)\big) \dmu^V(p) = t(G, W, \omega, \F_1).
\end{align*}
This shows that if $G$ is $\F_2$-positive if and only if it is
$\F_1$-positive.
\end{proof}

For a nonnegative kernel $W:~[0, 1]^2 \rightarrow [0, 1]$ (these are
also called {\it graphons}), function $\omega:~[0,1]\to[0,\infty)$,
and $\FF\subseteq[0,1]^V$, define
\begin{equation} \label{sdef}
s = s(G,W,\omega,\F)= \sup\limits_{p \in \F} \Big(\prod\limits_{v \in
V} \omega\big(p(v)\big) \cdot \prod\limits_{e \in E}
W\big(p(e)\big)\Big),
\end{equation}
and
\begin{equation*}
\F_{max} = \Big\{p \in \F:~\prod\limits_{v \in V}
\omega\bigl(p(v)\bigr) \cdot \prod\limits_{e \in E} W\big(p(e)\big) =
s\Big\}.
\end{equation*}
If the Lebesgue measure $\lambda(\F_{max}) > 0$, then we say that
$\F_{max}$ is \emph{emphasizable} from $\F$, and $(W,\omega)$
\textit{emphasizes} it.

\begin{Lemma} \label{pos1}
If $G$ is $\F_1$-positive and $\F_2$ is emphasizable from $\F_1$,
then $G$ is $\F_2$-positive.
\end{Lemma}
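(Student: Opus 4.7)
The plan is to use high powers of the emphasizing pair to concentrate an arbitrary $\F_1$-integral onto $\F_2$. Let $(W_0,\omega_0)$ emphasize $\F_2$ from $\F_1$, and put
$$S(p)=\prod_{v\in V}\omega_0(p(v))\prod_{e\in E}W_0(p(e)),\qquad s=\sup_{p\in\F_1}S(p),$$
so $0\le S\le s$ pointwise on $\F_1$ and $\F_2=\{p\in\F_1:S(p)=s\}$ has positive Lebesgue measure. I first dispose of the degenerate case $s=0$: then $S\equiv 0$ on $\F_1$ (since $S\ge 0$), so $\F_2$ and $\F_1$ agree up to a null set and $\F_2$-positivity follows from $\F_1$-positivity directly. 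So I may assume $s>0$.

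Now fix an arbitrary kernel $W$ and strictly positive bounded weight $\omega$ to test $\F_2$-positivity, and abbreviate $F(p)=\prod_v\omega(p(v))\prod_e W(p(e))$. For each $n\ge 1$, I use as test data in $\F_1$ the kernel $W\cdot W_0^n$ and the weight $\omega\cdot\omega_0^n$, for which
$$t\bigl(G,\;W\cdot W_0^n,\;\omega\cdot\omega_0^n,\;\F_1\bigr)=\int_{\F_1}F(p)\,S(p)^n\,dp.$$
Because $\omega\cdot\omega_0^n$ is merely nonnegative while the definition of $\F_1$-positivity demands a strictly positive weight, I apply $\F_1$-positivity to $(W\cdot W_0^n,\;\omega\cdot\omega_0^n+\delta)$ and let $\delta\downarrow 0$; dominated convergence (using the boundedness of $W,W_0,\omega,\omega_0$) then gives $\int_{\F_1}F(p)\,S(p)^n\,dp\ge 0$ for every $n$.

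Dividing by $s^n>0$ yields $\int_{\F_1}F(p)(S(p)/s)^n\,dp\ge 0$ for all $n$. Since $S/s\in[0,1]$ on $\F_1$ with value exactly $1$ on $\F_2$, the sequence $(S/s)^n$ converges pointwise to $\one_{\F_2}$ while remaining bounded by $1$, and $F$ itself is in $L^\infty$, so a second application of dominated convergence gives
$$\int_{\F_1}F(p)\left(\frac{S(p)}{s}\right)^n dp\;\longrightarrow\;\int_{\F_2}F(p)\,dp=t(G,W,\omega,\F_2).$$
The left-hand side is $\ge 0$ for every $n$, hence the limit is $\ge 0$, establishing $\F_2$-positivity.

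The only real friction in this argument is the mismatch between the emphasizing weight $\omega_0$ (which is allowed to vanish) and the test weights in the definition of $\F$-positivity (which must be strictly positive); the $\delta$-perturbation step removes it cleanly, after which the proof is a standard concentration-via-powers argument followed by dominated convergence.
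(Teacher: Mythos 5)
Your proof takes essentially the same approach as the paper's: raise the emphasizing pair $(W_0,\omega_0)$ to the $n$-th power, multiply by the test pair $(W,\omega)$, and apply dominated convergence so that the $\F_1$-integral concentrates onto $\F_2$ as $n\to\infty$ (the paper absorbs the normalization $s^{-n}$ into the weight via $s^{-n/|V|}$, while you divide afterwards, and the paper phrases it contrapositively, but these are cosmetic). Your explicit treatment of the degenerate case $s=0$ and the $\delta$-perturbation to keep the test weight strictly positive (since the emphasizing weight $\omega_0$ is only required to be nonnegative, so $\omega\cdot\omega_0^n$ need not lie in the class $(0,\infty)$ demanded by the definition of $\F$-positivity) tidy up two small technical points that the paper's proof passes over silently.
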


\begin{proof}
Suppose that $(U, \tau)$ emphasizes $\F_2$ from $\F_1$, and let $s
= s(G,U,\tau,\F_1)$. Assume that $G$ is not $\F_2$-positive, then
there exists a kernel $W$ and a weight function $\omega$ with $t(G,
W, \omega, \F_2) < 0$. Consider the kernel $W_n=U^nW$ and weight
function $\omega_n=s^{-n/|V|}\tau^n\omega$. Then
\[
\prod_{v \in V} \omega_n\big(p(v)\big) \cdot \prod_{e \in E}
W_n\big(p(e)\big) =\Big(\prod_{v \in V} \omega\big(p(v)\big) \cdot
\prod_{e \in E} W\big(p(e)\big)\Big)\cdot a(p)^n,
\]
where
\[
a(p)= \frac{1}{s} \prod\limits_{v \in V} \tau\big(p(v)\big) \cdot
\prod\limits_{e \in E} U\big(p(e)\big)
  \begin{cases}
   =1  & \text{if $p\in\FF_2$}, \\
   <1  & \text{otherwise}.
  \end{cases}
\]
Thus (by the dominated convergence theorem)
\begin{align*}
t(G, W_n, \omega_n, \F_1) &= \int\limits_{\F_1} \prod_{v \in V}
\omega_n\big(p(v)\big) \cdot \prod_{e \in E} W_n\big(p(e)\big)\, \ddp\\
&\to \int\limits_{\F_2} \prod_{v \in V} \omega\big(p(v)\big) \cdot
\prod_{e \in E} W\big(p(e)\big)\, \ddp = t(G, W, \omega, \F_2) < 0,
\end{align*}
which implies that $G$ is not $\F_1$-positive.
\end{proof}

For a partition $\P$ of $[0, 1]$ into a finite number of sets with
positive measure and a function $\pi:~V \rightarrow \P$, we call the box
$\FF(\pi)=\{p\in [0,1]^V:~p(v)\in \pi(v)~\forall v\in V\}$ a
\emph{partition-box}. Equivalently, a partition-box is a product set
$\prod_{v\in V} S_v$, where the sets $S_v\subseteq[0,1]$ are
measurable, and either $S_u\cap S_v=\emptyset$ or $S_u=S_v$ for all
$u,v\in V$.

A partition $\NN$ of $V$ is \textit{positive} if for any partition $\P$ as above, and any $\pi : V \to \P$ such that $\pi^{-1}(\P) = \NN$, $G$ is $\F(\pi)$-positive.

\begin{Lemma} \label{pos2}
If $\F_1\supseteq\F_2$ are partition-boxes, and $G$ is
$\F_2$-positive, then it is $\F_1$-positive.
\end{Lemma}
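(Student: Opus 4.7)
The plan is to argue by contradiction. Suppose $G$ is not $\FF_1$-positive, witnessed by a kernel $W$ and a positive weight function $\omega$ with $t(G,W,\omega,\FF_1)<0$. Transporting this data into $\FF_2$ via measurable bijections between the coordinate classes, I will build a pair $(W',\omega')$ satisfying $t(G,W',\omega',\FF_2)=t(G,W,\omega,\FF_1)<0$, which contradicts $\FF_2$-positivity of $G$.

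Write $\FF_1=\prod_{v\in V}S_v$ and $\FF_2=\prod_{v\in V}T_v$ with $T_v\subseteq S_v$. The first step is a small compatibility observation: within any $T$-class $V_i=\{v:T_v=T_i\}$, all nodes share the same $S$-class. Indeed, if $T_u=T_v=T_i$, then $S_u$ and $S_v$ both contain $T_i$, a set of positive measure, so they cannot be disjoint and hence coincide by the partition-box property. Denote this common $S$-class by $S_{j(i)}$; in particular $|S_v|/|T_v|$ depends only on the $T$-class of $v$.

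For each $T$-class $T_i$, I would choose a bi-measurable bijection $\phi_i\colon S_{j(i)}\to T_i$ that rescales Lebesgue measure by the constant factor $|T_i|/|S_{j(i)}|$ (such a map exists between any two standard non-atomic measure spaces). Define $\omega'(y)=(|S_{j(i)}|/|T_i|)\,\omega(\phi_i^{-1}(y))$ for $y\in T_i$, and extend $\omega'$ by the constant $1$ off $\bigcup_iT_i$; set $W'(y_1,y_2)=W(\phi_i^{-1}(y_1),\phi_{i'}^{-1}(y_2))$ on $T_i\times T_{i'}$, and extend $W'$ measurably and symmetrically to $[0,1]^2$. Since distinct $T$-classes are disjoint, both definitions are unambiguous, and the symmetry of $W$ yields symmetry of $W'$.

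The conclusion follows from a change of variables $p(v)=\phi_{i(v)}(q(v))$ in \eqref{EQ:tdef2} applied to $\FF_2$: each coordinate contributes a Jacobian factor $|T_v|/|S_v|$ which cancels exactly the factor $|S_v|/|T_v|$ from the definition of $\omega'$; the definition of $W'$ rewrites each edge factor as $W(q(e))$; and the new domain of integration is $\prod_v S_v=\FF_1$. This gives $t(G,W',\omega',\FF_2)=t(G,W,\omega,\FF_1)<0$, the required contradiction. The only real subtlety --- and the reason one needs $\FF_2$ to be a partition-box rather than merely a subset of $\FF_1$ --- lies in the well-definedness of the $\phi_i$: if two nodes shared an $S$-class but had overlapping yet distinct $T$-sets, the pullback prescriptions for $\omega'$ on the overlap would conflict and the construction would collapse.
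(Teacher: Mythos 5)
Your proof is correct and follows essentially the same route as the paper's: both use a coordinate-wise bi-measurable bijection between corresponding classes of the two partition-boxes and pull back $W$ and $\omega$ along it. The paper phrases it directly (defining $\varphi$ on all of $[0,1]$ collapsing $\P_2$-classes onto $\P_1$-classes, with the volume factor appearing as $\det(\psi')$) while you argue by contrapositive and absorb the Jacobian into $\omega'$, but the underlying construction is the same.
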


\begin{proof}
Let $\F_i$ be a product of classes of partition $\P_i$; we may assume
that $\P_2$ refines $\P_1$. For $P\in\P_2$, let $\overline{P}$ denote
the class of $\P_1$ containing $P$. Since every definition is invariant under measure preserving automorphisms of $[0,1]$, we may assume that every
partition class of $\P_1$ and $\P_2$ is an interval.

Consider any kernel $W$ and any weight function $\omega$. Let
$\varphi:~ [0, 1] \rightarrow [0, 1]$ be the function that maps each
$P\in\P_2$ onto $\overline{P}$ in a monotone increasing and affine way. The map
$\varphi$ is measure-preserving, because for each $A \subseteq Q \in
\P_1$,
\begin{equation} \label{muinv}
\lambda\big(\varphi^{-1}(A)\big) = \sum_{P \in \P_2\atop P\subseteq
Q} \lambda\big(\varphi^{-1}(A) \cap P\big) = \sum_{P \in \P_2\atop P
\subseteq Q} \lambda(A) \frac{\lambda(P)}{\lambda(Q)} = \lambda(A).
\end{equation}
Applying $\varphi$ coordinate-by-coordinate we get a measure
preserving map $\psi:~[0,1]^V\to[0,1]^V$. Then $\psi'=\psi|_{\F_2}$
is an affine bijection from $\F_2$ onto $\F_1$, and clearly
$\det(\psi')>0$. Hence
\begin{align*}
t(G, W^\varphi, \omega^\varphi, \F_2) &\mathop{=}^{\eqref{EQ:tdef2}}
\int\limits_{\F_2} \prod_{v \in V} \omega^\varphi\big(p(v)\big) \cdot
\prod_{e \in E} W^\varphi\big(p(e)\big) \ddp\\
&= \int\limits_{\F_2} \prod_{v \in V} \omega \big((\psi'(p))(v)\big) \cdot
\prod_{e \in E} W\big((\psi'(p))(e)\big) \ddp\\
&= \det(\psi')^{-1}  \cdot\int\limits_{\F_1} \prod_{v \in V}
\omega\big(p(v)\big)\cdot \prod_{e \in E}
W\big(p(e)\big) \ddp\\
&\mathop{=}^{\eqref{EQ:tdef2}} \det(\psi')^{-1}\cdot t(G, W, \omega, \F_1).
\end{align*}
Since $G$ is $\F_2$-positive, the left hand side is positive, and
hence $t(G, W, \omega, \F_1)\ge0$, proving that $G$ is
$\F_1$-positive.
\end{proof}

\begin{Corollary}\label{COR:pos2}
If $\NN_2$ refines $\NN_1$ and $\NN_2$ is positive, then $\NN_1$ is positive as well.
\end{Corollary}

\begin{Lemma} \label{constr}
Suppose that $\F_1$ is a partition-box defined by a partition $\P$
and function $\pi_1$. Let $Q\in\P$ and let $U$ be the union of an
arbitrary set of classes of $\P$. Let $\theta$ be a positive number
but not an integer. Split $Q$ into two parts with positive measure,
$Q^+$ and $Q^-$. Let $\deg(v,U)$ denote the number of neighbors $u$
of $v$ with $\pi_1(u)\subseteq U$. Define
\begin{alignat*}{3}
\pi_2(v) &=
\begin{cases}
   \pi_1(v) &\text{ if } \pi_1(v) \ne Q,
\\ Q^+  &\text{ if } \pi_1(v) = Q \text{ and } \deg(v, U) > \theta,
\\ Q^-  &\text{ if } \pi_1(v) = Q \text{ and } \deg(v, U) < \theta,
\end{cases}
\end{alignat*}
and let $\F_2$ be the corresponding partition-box. Then there exists
a pair $(W, \omega)$ emphasizing $\F_2$ from $\F_1$.
\end{Lemma}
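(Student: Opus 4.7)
The plan is to exhibit an explicit emphasizing pair $(W,\omega)$. Pick any $\alpha>1$ and set $\beta=\alpha^{-\theta}$. Define the weight function by $\omega(x)=\beta$ if $x\in Q^+$ and $\omega(x)=1$ otherwise, and define the symmetric kernel
\[
W(x,y)=\alpha^{\one_{Q^+}(x)\one_U(y)+\one_{Q^+}(y)\one_U(x)}.
\]
This kernel is positive and takes the value $1$ unless the edge has an endpoint in $Q^+$.

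For a map $p\in\F_1$ the only freedom is the subset $S^+=\{v\in f_1^{-1}(Q):p(v)\in Q^+\}$ of vertices sent into $Q^+$. The first key step is to rewrite the integrand appearing in \eqref{sdef} by exchanging the sum over edges with a sum over vertices:
\[
\prod_{v\in V}\omega(p(v))\cdot\prod_{e\in E}W(p(e))=\alpha^{-\theta|S^+|+\sum_v \one_{Q^+}(p(v))\cdot|\{u\in N(v):p(u)\in U\}|}.
\]
For $v\in S^+$ the inner cardinality equals $\deg(v,U)$, so the exponent collapses to $\sum_{v\in S^+}(\deg(v,U)-\theta)$.

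Since $\alpha>1$, maximizing this expression over $\F_1$ amounts to maximizing the exponent, and the contributions of different vertices in $f_1^{-1}(Q)$ decouple: one puts $v$ into $S^+$ precisely when $\deg(v,U)-\theta>0$, and leaves it out precisely when $\deg(v,U)-\theta<0$. Because $\theta$ is not an integer, $\deg(v,U)\ne\theta$ at every $v$, so the maximizing $S^+$ is uniquely determined, and inspection shows it matches the prescription defining $f_2$. The integrand is constant on each box of the partition obtained from $\P$ by splitting $Q$ into $Q^+$ and $Q^-$, so $\F_{\max}$ coincides with $\F_2$ up to a null set; and $\lambda(\F_2)>0$ since both $Q^+$ and $Q^-$ have positive measure.

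The one step that requires genuine care is verifying the vertex-by-vertex accounting in both the case $Q\subseteq U$ and the case $Q\cap U=\emptyset$. In the first case an edge with both endpoints in $Q^+$ receives weight $\alpha^2$, and these two factors of $\alpha$ account for the fact that each endpoint counts the other toward its $\deg(\cdot,U)$; in the second case such an edge receives weight $\alpha^0=1$, matching the fact that $Q$-class neighbors are not counted in $\deg(v,U)$ when $Q\not\subseteq U$. The exponent $\one_{Q^+}(x)\one_U(y)+\one_{Q^+}(y)\one_U(x)$ is precisely the device that makes both cases telescope uniformly, and checking this bookkeeping is the sole nonroutine computation.
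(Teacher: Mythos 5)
Your proof is correct and is essentially the same construction as the paper's: you penalize membership in $Q^+$ via $\omega$ at rate $\alpha^{-\theta}$ and reward $Q^+$–$U$ incidences via $W$ at rate $\alpha$, so the log-weight of $p\in\F_1$ becomes $\sum_{v\in S^+}(\deg(v,U)-\theta)$ and the non-integrality of $\theta$ makes the maximizer a single partition-box, namely $\F_2$. The only difference is cosmetic: the paper fixes $\alpha=2$ and gives the kernel explicitly only when $Q\cap U=\emptyset$ (saying the case $Q\subseteq U$ is ``similar''), whereas your symmetrized exponent $\one_{Q^+}(x)\one_U(y)+\one_{Q^+}(y)\one_U(x)$ handles both cases in one formula by automatically yielding $\alpha^2$ on $Q^+\times Q^+$ when $Q\subseteq U$.
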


\begin{proof}
Clearly, $\lambda(\F_2) > 0$. First, suppose that $Q\not\subseteq U$.
Let $W$ be 2 in $Q^+ \times U$ and in $U \times Q^+$, and 1
everywhere else. Let $\omega(x)$ be $2^{-\theta}$ if $x \in Q^+$ and 1
otherwise. It is easy to see that the weight of a $p \in \F_1$ is
$2^a$, where $a=\sum_{v \in p^{-1}(Q^+)} \big(\deg(v, U) - \theta \big)$.
This expression is maximal if and only if $p \in \F_2$. 

In the case when
$Q \subset U$ the only difference is that one has to let $W = 4$ in the intersection $Q^+ \times U \cap U \times Q^{+}$. 
\end{proof}

\begin{Corollary}\label{COR:constr}
If $\NN_1$ is a positive partition of the vertex set, $U$ is an arbitrary union of classes, $Q$ is a single class, $\theta > 0$ is not an integer, and $\NN_2$ is obtained from $\NN_1$ by splitting $Q$ according to whether (by abuse of notation) $\deg(v,U) > \theta$ or not for each vertex $v \in Q$, then $\NN_2$ is also positive.
\end{Corollary}

We can use Corollary \ref{COR:constr} iteratively: we start with the
trivial partition, and refine it so that it remains positive.
This is essentially the 1-dimensional Weisfeiler--Lehman algorithm, which classifies vertices recursively, see e.~g.~\cite{Douglas} It starts splitting vertices into classes according to their degree. Then in each step it refines the existing classes according to the number of neighbors in each of the current classes. The analogy will be clear from the proofs below. 
There is a
non-iterative description of the resulting partition, and this is
what we are going to describe next.

The \emph{walk-tree} of a rooted graph $(G, v)$ is the following
infinite rooted tree $R(G, v)$: its nodes are all finite walks
starting from $v$, its root is the 0-length walk, and the parent of
any other walk is obtained by deleting its last node. The \emph{walk-tree partition} $\RR$ is
the partition of $V$ in which two nodes $u,v\in V$ belong to the same
class if and only if $R(G,u)\cong R(G,v)$.

\begin{Proposition} \label{prop:walk}
If a graph $G$ is positive, then its walk-tree partition is also positive.
\end{Proposition}

\begin{proof}
Let the $k$-neighborhood of $r$ in $R(G, r)$ be denoted by $R_k(G,
r)$.  The \textit{$k$-walk-tree partition} $\RR_k$ is the partition of $V$ in which two nodes $u,v \in V$ belong to the same class if and only if $R_k(G,u) \cong R_k(G,v)$.
 Clearly, if for two vertices $R(G,u) \neq R(G,v)$ then there is a $k = k(u,v)$ such that $R_k(G,u) \neq R_k(G,v)$. Since $V$ is finite, choosing $k_0 = \max_{u,v \in V} k(u,v)$ we see that $\RR_{k_0} = \RR$. Thus we are done if we show that $\RR_k$ is positive for every $k$.

We prove this by induction. If $k = 0$ then $\RR_0$ is the trivial partition, hence the assertion follows from the positivity of $G$.  Now let us assume that the statement is true
for $k$. Clearly, $R_{k+1}(G,v)$ is determined by the \textit{neighborhood profile}, the multi-set $\{ R_k(G,u) : u \sim v\}$. Using Corollary~\ref{COR:constr}, we separate each
class $Q$ into subclasses so that $u, v \in Q$ end up in the same class if and only if  their neighborhood profiles are the same. The new partition is exactly $\RR_{k+1}$. 
\end{proof}

\begin{Corollary} \label{subgraph} Let $G(V,E)$ be a positive graph, and 
let $S \subset V$ be the union of some classes of the walk-tree partition. Then $G[S]$ is also positive.
\end{Corollary}

\begin{proof}
By Corollary~\ref{COR:pos2} the partition $\NN = \{ S, V \setminus S\}$ is positive. Let $\P = \{ [0,1/2] , (1/2, 1] \}$ and define $\pi : V \to \P$ by $\pi(v) = [0,1/2]$ if and only if $v \in S$.  Suppose that $G[S]$ is negative as demonstrated by some $W$. Let us define
\[ W'(x,y) = \left\{ \begin{array}{ll} W(2x,2y) & : x,y \in [0,1/2] \\ 1 & : \mbox{otherwise} \end{array} \right.\]
Then $t(G,W',1,\F(\pi)) < 0$ contradicting the positivity of the partition $\NN$.
\end{proof}

\begin{Corollary}\label{COR:DEG}
If $G$ is positive, then for each $k$ the subgraph of $G$ spanned by
all nodes with degree $k$ is positive as well. \qed
\end{Corollary}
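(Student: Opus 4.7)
The plan is to deduce this directly from Corollary \ref{subgraph}, since the partition of $V(G)$ by degree is coarser than the walk-tree partition $\RR$. So the whole proof reduces to exhibiting an appropriate walk-tree function whose preimages pick out the degree classes.

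First I would observe that the degree of a vertex $v$ is already encoded in its walk-tree $R(G,v)$: the children of the root are exactly the length-$1$ walks from $v$, so the root of $R(G,v)$ has $\deg(v)$ children. Consequently, $R(G,u)\cong R(G,v)$ implies $\deg(u)=\deg(v)$, meaning the degree function $V\to\Zbb$ is constant on every class of the walk-tree partition $\RR$.

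Next, let $d_1,\dots,d_r$ be the distinct degrees occurring in $G$. Choose any measurable partition $\P=\{P_1,\dots,P_r\}$ of $[0,1]$ into $r$ classes of positive measure, and define $f:V\to\P$ by $f(v)=P_i$ whenever $\deg(v)=d_i$. By the previous paragraph, $f$ is constant on each class of $\RR$, so $f$ is a walk-tree function in the sense of Proposition \ref{prop:walk}.

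Finally, I would apply Corollary \ref{subgraph} with the set $\{P_i\}\subseteq\P$ corresponding to degree $k$ (if $k$ does not occur in $G$ the statement is vacuous): the preimage $f^{-1}(P_i)$ is exactly the set of vertices of degree $k$, so the subgraph of $G$ spanned by the degree-$k$ vertices is positive. There is essentially no obstacle here — the work has already been done in Corollary \ref{subgraph}; the only nontrivial input is the trivial observation that the vertex degree is a walk-tree invariant.
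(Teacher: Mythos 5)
Your proof is correct and matches the paper's intended argument: the paper states Corollary~\ref{COR:DEG} with an immediate \verb|\qed|, treating it as a direct consequence of Corollary~\ref{subgraph} via exactly the observation you make, namely that the degree of $v$ is determined by the root's number of children in $R(G,v)$, so the degree classes are unions of walk-tree classes and hence preimages under a suitable walk-tree function.
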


\begin{Corollary}
For each odd $k$ the number of nodes of $G$ with degree $k$ must be
even.
\end{Corollary}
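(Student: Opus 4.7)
The plan is to combine Corollary \ref{subgraph} with the earlier lemma (``$G$ positive $\Rightarrow |E(G)|$ even'') via a handshake-style parity argument on edges between degree classes. First I would observe that the function sending each vertex $v$ to its degree $\deg_G(v)$ is a walk-tree function: the walk-tree $R(G,v)$ records $\deg_G(v)$ as the number of children of the root, so any two vertices with isomorphic walk-trees have the same degree, and any partition by degree is constant on classes of $\RR$. Consequently, for any set $S$ of nonnegative integers, the set of vertices with degree in $S$ is a preimage under a walk-tree function, so by Corollary \ref{subgraph} the induced subgraph $G_S$ is positive and hence has an even number of edges.

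Let $V_k$ denote the vertices of $G$ of degree $k$, let $e_{ii}$ denote the number of edges spanned by $V_i$, and let $e_{ij}$ (for $i\ne j$) denote the number of edges between $V_i$ and $V_j$. Applying the previous paragraph to $S=\{k\}$ gives that $e_{kk}$ is even; applying it to $S=\{j,k\}$ for each $j\ne k$ gives that $e_{jj}+e_{kk}+e_{jk}$ is even, so $e_{jk}$ itself is even.

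Now I would finish by double-counting edge endpoints in $V_k$:
\[
k\cdot|V_k| \;=\; \sum_{v\in V_k}\deg_G(v) \;=\; 2e_{kk} + \sum_{j\ne k} e_{kj},
\]
which is a sum of even numbers, hence even. For odd $k$ this forces $|V_k|$ to be even. The only real content is recognizing that Corollary \ref{subgraph} yields parity information not only on edges internal to a single degree class but, by combining two classes, on edges crossing between any two degree classes as well; once this is in hand, the conclusion is the standard handshake parity argument.
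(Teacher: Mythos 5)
Your proof is correct, and it routes through different intermediate results than the paper's. The paper's own argument applies Proposition~\ref{prop:walk} directly: assuming $|V_k|$ is odd (and $k$ odd), it takes the partition-box $\F$ separating the degree-$k$ vertices into class $A$ from the rest $\bar A$, together with the kernel that is $-1$ between $A$ and $\bar A$ and $+1$ elsewhere; for every $p\in\F$ the total degree of the $A$-vertices is $k|V_k|$, odd, so the number of $A$--$\bar A$ edges is odd, forcing the weight of $p$ to be $-1$ and hence $t(G,W,1,\F)=-\lambda(\F)<0$, a contradiction. You instead go through Corollary~\ref{subgraph}: you take the degree partition as a walk-tree function, apply the corollary to the sets $S=\{k\}$ and $S=\{j,k\}$ to make the induced subgraphs positive, invoke the ``positive implies even number of edges'' lemma, deduce that each $e_{kk}$ and each $e_{jk}$ is even, and close with a handshake count $k|V_k|=2e_{kk}+\sum_{j\neq k}e_{kj}$. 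Both proofs rest on the same walk-tree machinery, but yours is one layer removed (it uses a consequence, Corollary~\ref{subgraph}, rather than Proposition~\ref{prop:walk} directly) and actually extracts a stronger conclusion along the way — that \emph{every} inter-class edge count $e_{jk}$ is even — whereas the paper constructs a single kernel witness that yields the contradiction in one stroke. The paper's version is a bit more economical; yours records more parity information about the degree partition, which could be useful elsewhere.
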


\begin{proof}
Otherwise, consider the partition-box $\F$ that separates the
vertices of $G$ with degree $d$ to class $A=[0, 1/2]$ and the other
vertices to $\bar{A}=(1/2,1]$. Consider the kernel $W$ which is $-1$
between $A$ and $\bar{A}$ and 1 in the other two cells. Then for each
map $p\in[0,1]^V$, the total degree of the nodes mapped into class
$A$ is odd, so there is an odd number of edges between $A$ and
$\bar{A}$. So the weight of $p$ is $-1$, therefore $t(G, W, 1, \F) =
-\lambda(\F) < 0$.
\end{proof}

\begin{Corollary}\label{COR:TREES}
Conjecture \ref{conj} is true for trees.
\end{Corollary}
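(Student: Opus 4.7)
The ``if'' direction is Lemma \ref{LEM:EASY}, so my plan is to show every positive tree $T$ is symmetric. The key observation is that for a tree, two vertices have isomorphic walk-trees precisely when they lie in the same $\mathrm{Aut}(T)$-orbit, because the walk-tree of a rooted tree determines its rooted-tree isomorphism type. Combined with Corollary~\ref{subgraph}, this gives: the induced subgraph on any union of $\mathrm{Aut}(T)$-orbits is positive. The strategy is to choose very small orbit unions whose spanned subgraphs are elementary enough that positivity imposes strong parity constraints on the structure of $T$ at its center.

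First I show that the center of $T$ must be a single vertex rather than an edge. Automorphisms preserve the center, so if it were an edge $c_1c_2$ then $\{c_1,c_2\}$ would be a union of one or two $\mathrm{Aut}(T)$-orbits; Corollary~\ref{subgraph} would then force the induced subgraph on $\{c_1,c_2\}$ to be positive, but this subgraph is $K_2$ (one edge), which is not positive. So $T$ has a central vertex $c$, fixed by $\mathrm{Aut}(T)$. Next let $\tau_1,\dots,\tau_k$ be the distinct rooted-isomorphism types of the subtrees hanging off $c$, with multiplicities $m_1,\dots,m_k$. For each $i$, the $m_i$ roots of the subtrees of type $\tau_i$ form a single orbit, because an isomorphism between any two such copies, extended by the identity on the rest of $T$, is a $T$-automorphism transposing their roots. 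Adjoining $\{c\}$ yields a union of orbits whose induced subgraph is exactly the star $K_{1,m_i}$. Corollary~\ref{subgraph} then forces $K_{1,m_i}$ to be positive; but $t(K_{1,m},W)=\int_0^1\bigl(\int_0^1 W(x,y)\,dy\bigr)^m\,dx$ is readily made negative for odd $m$ (take $W\equiv-1$), so every $m_i$ must be even.

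With every $m_i$ even I can pair up the $m_i$ copies of each $\tau_i$ arbitrarily; putting one subtree from each pair into $A$, the other into $B$, and taking $S=\{c\}$ assembles the symmetric partition of $T$. The set $S$ is trivially independent; every edge of $T$ either touches $c$ or lies inside a single subtree, so no $A$-$B$ edge exists; and the chosen pair-isomorphisms combine into an isomorphism $S\cup A\to S\cup B$ fixing $S$, exhibiting $T$ as symmetric. The main point that requires care is the orbit-identification in the middle step, where I need that any transposition of two iso-copies of a subtree genuinely lifts to an element of $\mathrm{Aut}(T)$; this works because the $m_i$ copies are pairwise vertex-disjoint and share only the common parent $c$, so a chosen iso between two copies combined with the identity elsewhere defines a graph automorphism of $T$.
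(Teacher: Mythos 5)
Your proof is correct and follows essentially the same route as the paper's: a walk-tree decomposition focused on the center of the tree, combined with Corollary~\ref{subgraph} to extract a non-positive induced subgraph ($K_2$ when the center is an edge, and $K_{1,m_i}$ with $m_i$ odd when it is a vertex). You make explicit the identification of walk-tree classes with $\mathrm{Aut}(T)$-orbits and spell out the final symmetric decomposition $(S,A,B)$, details the paper leaves terse, but the structure of the argument is the same.
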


\begin{proof}
From the walk-tree of a vertex $v$ of the tree $G$, we can easily
decode the rooted tree $(G,v)$. We call a vertex {\it central} if
it cuts $G$ into components with at most $|V|/2$ nodes. There can be
either one central node or two neighboring central nodes of $G$. If
there are two of them, then their walk-trees are different from the
walk-trees of every other node. But these two points span a graph with a single edge,
which is not positive, therefore Corollary \ref{subgraph} implies that
neither is $G$. If there is only one central node, then consider the
walk-trees of its neighbors. If there is an even number of each kind,
then $G$ is symmetric (and is thus positive by Lemma~\ref{LEM:EASY}). Otherwise we can find two classes (one consist of the central node, the other consists of an odd number of its neighbors) whose union spans a graph with an odd number of edges, hence it is negative by Lemma~\ref{LEM:evenedges}.
\end{proof}

\section{Homomorphic images of positive graphs} \label{proofevenhalf}

The main goal of this section is to prove Theorem \ref{evenhalf}. In
what follows, let $n$ be an integer. For a homomorphism $f:~ G
\rightarrow \Ki$, we call an edge $e \in E(\Ki)$ \emph{$f$-odd} if
$\big| f^{-1}(e) \big|$ is odd. We call a vertex $v \in V(K_n)$
\emph{$f$-odd} if there exists an $f$-odd edge incident with $v$. Let
$E_\text{odd}(f)$ and $V_\text{odd}(f)$ denote the set of $f$-odd
edges and nodes of $\Ki$, respectively, and define
\begin{equation} \label{rdef}
r(f) = \big| V(G) \big| - \big| f\big(V(G)\big) \big| + \frac{1}{2}
|V_\text{odd}(f)|.
\end{equation}

\begin{Lemma}
Let $G_i=(V_i,E_i)$ $(i=1,2)$ be two graphs, let $f:~G_1G_2
\rightarrow \Ki$, and let $f_i:~ G_i \rightarrow \Ki$ denote the
restriction of $f$ to $V_i$. Then $r(f) \ge r(f_1)+r(f_2)$.
\end{Lemma}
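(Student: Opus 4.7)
My plan is to expand $r(f)-r(f_1)-r(f_2)$ directly from the definition and reduce the claim to a set-theoretic inequality, then verify it using the disjoint-union structure. Write $A_i = f_i(V_i)$ and $O_i = V_\text{odd}(f_i)$. Since $V(G_1G_2) = V_1 \sqcup V_2$ and $f(V(G_1G_2)) = A_1 \cup A_2$, inclusion-exclusion gives $|f(V(G_1G_2))| = |A_1| + |A_2| - |A_1 \cap A_2|$, and the $|V_i| - |A_i|$ terms cancel, leaving
\begin{equation*}
r(f) - r(f_1) - r(f_2) = |A_1 \cap A_2| + \tfrac{1}{2}\bigl(|V_\text{odd}(f)| - |O_1| - |O_2|\bigr).
\end{equation*}
So the lemma is equivalent to $2|A_1 \cap A_2| + |V_\text{odd}(f)| \ge |O_1| + |O_2|$.

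Next I would use the disjoint-union structure. For any edge $e$ of $K_n$ one has $|f^{-1}(e)| = |f_1^{-1}(e)| + |f_2^{-1}(e)|$, hence
\begin{equation*}
E_\text{odd}(f) = E_\text{odd}(f_1) \triangle E_\text{odd}(f_2).
\end{equation*}
From this I would deduce $O_1 \triangle O_2 \subseteq V_\text{odd}(f)$: if $v \in O_1 \setminus O_2$, some edge $e$ at $v$ lies in $E_\text{odd}(f_1)$; since $v \notin O_2$, no edge at $v$ lies in $E_\text{odd}(f_2)$, so $e \notin E_\text{odd}(f_2)$, giving $e \in E_\text{odd}(f)$ and thus $v \in V_\text{odd}(f)$. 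Therefore
\begin{equation*}
|V_\text{odd}(f)| \;\ge\; |O_1 \triangle O_2| \;=\; |O_1| + |O_2| - 2|O_1 \cap O_2|.
\end{equation*}

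Finally I would use the trivial observation that an $f_i$-odd edge has both endpoints in $A_i$, so $O_i \subseteq A_i$ and hence $|O_1 \cap O_2| \le |A_1 \cap A_2|$. Combining this with the previous inequality yields
\begin{equation*}
2|A_1 \cap A_2| + |V_\text{odd}(f)| \;\ge\; 2|O_1 \cap O_2| + |O_1| + |O_2| - 2|O_1 \cap O_2| \;=\; |O_1| + |O_2|,
\end{equation*}
which is exactly what was needed. The only point requiring care is the containment $O_1 \triangle O_2 \subseteq V_\text{odd}(f)$: equality is tempting but fails, because a vertex in $O_1 \cap O_2$ may or may not lie in $V_\text{odd}(f)$ depending on whether its $f_1$-odd and $f_2$-odd incident edges happen to coincide. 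Everything else is bookkeeping.
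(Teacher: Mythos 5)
Your proof is correct and follows essentially the same route as the paper's: the same inclusion--exclusion on the image, the same identity $E_\text{odd}(f)=E_\text{odd}(f_1)\triangle E_\text{odd}(f_2)$ leading to $V_\text{odd}(f)\supseteq O_1\triangle O_2$, and the same final comparison $|O_1\cap O_2|\le|A_1\cap A_2|$. You spell out the intermediate steps (the reduction to the set-theoretic inequality, the $O_i\subseteq A_i$ containment, and why the containment of symmetric differences can be strict) more explicitly than the paper does, but the argument is the same.
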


\begin{proof}
Clearly $|V(G)|=|V_1|+|V_2|$ and $|V(f(G))|=
|f(V_1)|+|f(V_2)|-|f(V_1)\cap f(V_2)|$. Furthermore,
$E_\text{odd}(f)=E_\text{odd}(f_1)\triangle E_\text{odd}(f_2)$, which
implies that $V_\text{odd}(f)\supseteq V_\text{odd}(f_1)\triangle
V_\text{odd}(f_2)$. Hence
\begin{align*}
|V_\text{odd}(f)|&\ge |V_\text{odd}(f_1)|+ |V_\text{odd}(f_2)| - 2
|V_\text{odd}(f_1)\cap V_\text{odd}(f_2)|\\
&\ge |V_\text{odd}(f_1)|+ |V_\text{odd}(f_2)| - 2 |f(V_1)\cap
f(V_2)|.
\end{align*}
Substituting these expressions in \eqref{rdef}, the lemma follows.
\end{proof}

Let $G^k$ denote the disjoint union of $k$ copies of a graph $G$.
This lemma implies that if $f:~G^k \rightarrow \Ki$ is any
homomorphism and $f_i:~ G \rightarrow \Ki$ denotes the restriction of
$f$ to the $i$-th copy of $G$, then
\begin{equation} \label{rsupadd}
r(f) \ge \sum\limits_{i=1}^{k} r(f_i).
\end{equation}

We define two parameters of a graph $G$:
\begin{equation} \label{rbardef}
\bar{r}(G) = \min \big\{ r(f) \big| n \in \mathbb{N}, f:~G \rightarrow \Ki \big\}
\end{equation}
and 
\begin{equation} \label{pdef}
q(G) = \min \Big\{|V(G)| - |f(V(G))|~\big|~ n \in \mathbb{N}, f:~G \rightarrow
\Ki\text{ is even} \Big\}.
\end{equation}
If there is no even homomorphism from $G$ to $K_n$ for any $n$ then we define $q(G) = \infty$.
Since $q(G) =\min \big\{ r(f) \big| n \in \mathbb{N}, f:~G \rightarrow \Ki\text{ is
even}\big\}$, it follows that
\begin{equation}\label{EQ:RBAR-P}
q(G) \ge \bar{r}(G).
\end{equation}
Furthermore, considering any injective $f:~G\to\Ki$, we see that
\begin{equation}\label{EQ:INJ}
\bar{r}(G) \le r(f) = \big|V(G) \big| - \big| f\big(V(G)\big) \big| +
\frac{1}{2} \big| f\big(V(G)\big) \big| = \frac12 \big|V(G) \big|.
\end{equation}

\begin{Lemma}
\begin{equation} \label{rbarmulti}
\bar{r}(G^k) = k \bar{r}(G).
\end{equation}
\end{Lemma}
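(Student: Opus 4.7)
The plan is to prove the two inequalities $\bar r(G^k)\le k\bar r(G)$ and $\bar r(G^k)\ge k\bar r(G)$ separately; both are short given what has already been set up.

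For the upper bound $\bar r(G^k)\le k\bar r(G)$, I would pick an optimal homomorphism $f_0\colon G\to K_n$ achieving $r(f_0)=\bar r(G)$. Since $n$ is assumed to be large (we may enlarge it freely, as adding isolated vertices to $K_n$ does not change $\bar r$), I can translate $k$ copies of $f_0$ so that their images lie in pairwise disjoint vertex subsets of $K_n$, and glue them into a single homomorphism $f\colon G^k\to K_n$. Because the images are disjoint, $|f(V(G^k))|=k|f_0(V(G))|$ and the odd-edge sets of the restrictions live in pairwise disjoint parts of $E(K_n)$, so $V_{\text{odd}}(f)$ is the disjoint union of the $V_{\text{odd}}(f_{0,i})$. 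Substituting into \eqref{rdef} gives $r(f)=k\,r(f_0)=k\bar r(G)$, hence $\bar r(G^k)\le k\bar r(G)$.

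For the lower bound $\bar r(G^k)\ge k\bar r(G)$, I would simply iterate the preceding lemma. Applying \eqref{rsupadd} to any homomorphism $f\colon G^k\to K_n$ with restrictions $f_1,\dots,f_k$ to the individual copies yields
\[
r(f)\ \ge\ \sum_{i=1}^k r(f_i)\ \ge\ \sum_{i=1}^k \bar r(G)\ =\ k\bar r(G),
\]
since each $f_i$ is a homomorphism of a single copy of $G$ into $K_n$ and therefore satisfies $r(f_i)\ge \bar r(G)$ by definition \eqref{rbardef}. Taking the infimum over $f$ gives $\bar r(G^k)\ge k\bar r(G)$, completing the proof.

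There is no real obstacle here; the only mildly delicate point is the disjointness argument in the upper bound, which requires noting that we are free to take $n$ as large as we wish (or, equivalently, that $\bar r(G)$ is unchanged if we replace $K_n$ by $K_{n'}$ for any $n'\ge |V(G)|$, since extra isolated vertices of $K_n$ are never used by an optimal map).
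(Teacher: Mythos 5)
Your proof is correct and follows exactly the paper's two-inequality strategy: the lower bound by iterating the superadditivity lemma \eqref{rsupadd}, and the upper bound by placing $k$ optimal copies of a minimizing homomorphism onto pairwise disjoint vertex sets of $K_n$. The extra remark about being free to enlarge $n$ is a sensible clarification of what the paper leaves implicit.
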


\begin{proof}
For one direction, take an $f:~ G^k\rightarrow \Ki$ with $r(f) =
\bar{r}(G^k)$. Then
\begin{equation*}
\bar{r}(G^k) = r(f) \mathop{\ge}^{\eqref{rsupadd}} \sum_{i = 1}^k
r(f_i) \mathop{\ge}^{\eqref{rbardef}} \sum_{i = 1}^k \bar{r}(G) = k
\cdot \bar{r}(G).
\end{equation*}
For the other direction, let us choose each $f_i$ so that $r(f_i) =
\bar{r}(G)$ and the images $f_i(G)$ are pairwise disjoint. Then
\begin{equation*}
\bar{r}(G^k) \mathop{\le}^{\eqref{rbardef}} r(f) = \sum_{i = 1}^k
r(f_i) = \sum_{i = 1}^k \bar{r}(G) = k \cdot \bar{r}(G). \qedhere
\end{equation*}
\end{proof}

\begin{Lemma}
\begin{equation} \label{p2G}
q(G^2) = \bar{r}(G^2).
\end{equation}
\end{Lemma}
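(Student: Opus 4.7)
The inequality $p(G^2)\ge \bar r(G^2)$ is already recorded in \eqref{EQ:RBAR-P}, so the plan is to establish the reverse inequality $p(G^2)\le \bar r(G^2)$ by an explicit construction that turns an optimal homomorphism $G\to K_n$ into an even homomorphism $G^2\to K_n$ while keeping the image large.

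Start by picking $f^{*}:G\to K_n$ with $r(f^{*})=\bar r(G)$, and write $V_o=V_{\text{odd}}(f^{*})$. The first key observation is that $V_o\subseteq f^{*}(V(G))$ and, more importantly, both endpoints of any $f^{*}$-odd edge are $f^{*}$-odd vertices; so $E_{\text{odd}}(f^{*})\subseteq \binom{V_o}{2}$. This is what allows us to ``rigidly'' reproduce the odd-edge set by any permutation of $V(K_n)$ that fixes $V_o$ pointwise.

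Now, assuming $n$ is large enough, I would choose a permutation $\pi$ of $V(K_n)$ such that $\pi$ fixes $V_o$ pointwise and sends $f^{*}(V(G))\setminus V_o$ bijectively onto a set of fresh vertices in $V(K_n)\setminus f^{*}(V(G))$. Define $g:G^2\to K_n$ by putting $f^{*}$ on the first copy of $G$ and $\pi\circ f^{*}$ on the second copy. Because $\pi$ restricts to the identity on $V_o$ and $E_{\text{odd}}(f^{*})\subseteq\binom{V_o}{2}$, we get $E_{\text{odd}}(\pi\circ f^{*})=\pi\bigl(E_{\text{odd}}(f^{*})\bigr)=E_{\text{odd}}(f^{*})$, so these two edge-sets cancel in the symmetric difference and $g$ is even.

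It then remains to bound $|V(G^2)|-|g(V(G^2))|$. By the choice of $\pi$, the two images $f^{*}(V(G))$ and $\pi f^{*}(V(G))$ intersect exactly in $V_o$, so $|g(V(G^2))|=2|f^{*}(V(G))|-|V_o|$, giving $p(G^2)\le 2|V(G)|-2|f^{*}(V(G))|+|V_o|=2\,r(f^{*})=2\bar r(G)=\bar r(G^2)$ by \eqref{rbarmulti}. The main conceptual step is the observation that $f^{*}$-odd edges live entirely inside $V_o$; once that is in hand the construction is natural, and the only mild technicality is having enough room in $K_n$ to place a disjoint copy of $f^{*}(V(G))\setminus V_o$, which is guaranteed since $n$ may be taken arbitrarily large.
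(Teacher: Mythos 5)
Your proof is correct and is essentially the paper's construction: their explicit relabeling $v_i\mapsto v_i'$ of the non-odd image points, keeping the $f_1$-odd points fixed, is exactly your permutation $\pi$, and the image-size computation is the same. The only cosmetic difference is that you verify evenness via the identity $E_{\text{odd}}(f)=E_{\text{odd}}(f_1)\triangle E_{\text{odd}}(f_2)$ from the preceding lemma rather than the paper's edge-by-edge case analysis, which makes the argument slightly more economical but not substantively different.
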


\begin{proof}
We already know by \eqref{EQ:RBAR-P} that $q(G^2) \ge \bar{r}(G^2)$.
For the other direction, we define $f:~ G^2\rightarrow \Ki$ as
follows. We choose $f_1$ so that $r(f_1) = \bar{r}(G)$. Consider all
points $v_1, v_2, ..., v_l$ in $f_1\big(V(G)\big)$ which are not
$f_1$-odd. Let us choose pairwise different nodes $v'_1, v'_2, ...,
v'_l$ disjointly from $f_1\big(V(G)\big)$. Now we choose $f_2$ so that
for each $x \in V(G)$, if $f_1(x)$ is an $f_1$-odd point, then
$f_2(x) = f_1(x)$, and if $f_1(x) = v_i$, then $f_2(i) = v'_i$.

If an edge $e \in E(\Ki)$ is incident to a $v_i$, then $\big|
f_1^{-1}(e) \big|$ is even and $f_2^{-1}(e) = \emptyset$. If $e$ is
incident to a $v'_i$, then $\big| f_2^{-1}(e) \big|$ is even and
$f_1^{-1}(e) = \emptyset$. If $e$ is not incident to any $v_i$ or
$v'_i$, then $\big| f_1^{-1}(e) \big| = \big| f_2^{-1}(e) \big|$.
Therefore $f$ is even. Thus,
\begin{equation*}
q(G^2) \mathop{\le}^{\eqref{pdef}} r(f) \mathop{=}^{\eqref{rdef}}
\big|V(G^2)\big| - \big|f(V(G^2))\big|
\end{equation*}
\begin{equation*}
= 2 \big|V(G)\big| - \Big|f_1\big(V(G)\big)\Big| -
\Big|f_2\big(V(G)\big)\Big| + \Big|f_1\big(V(G)\big) \cap
f_2\big(V(G)\big)\Big|
\end{equation*}
\begin{equation*}
= 2 \big|V(G)\big| - 2 \Big|f_1\big(V(G)\big)\Big| +
|V_{\text{odd}}(f_1)| \mathop{=}^{\eqref{rdef}} 2 r(f_1) = 2
\bar{r}(G) \mathop{=}^{\eqref{rbarmulti}} \bar{r}(G^2). \qedhere
\end{equation*}
\end{proof}

Let $K_n^w$ denote $K_n$ equipped with an edge-weighting $w:~ E(K_n)
\rightarrow \{-1, 1\}$. Let the stochastic variable $K_n^{\pm 1}$
denote $K_n^w$ with a uniform random $w$.

\begin{Lemma}
For a fixed graph $G$, and $n \rightarrow \infty$,
\begin{equation*}
\E\big(t(G, \Kpm)\big) = \left\{ \begin{array}{ll} \Theta\big(n^{-q(G)}\big) & \text{ if $q(G) < \infty$} \\ 0 & \text{ otherwise.} \end{array} \right.
\end{equation*}
\end{Lemma}

\begin{proof}
If an edge $e$ is $f$-odd, then changing the weight on $e$ changes
the sign of the homomorphism, therefore $\E_w\big(\hom(G, K_n^w,
f)\big) = 0$. On the other hand, if $f$ is even, then for all $w$,
$\hom(G, K_n^w, f) = 1$. Therefore, taking a uniformly random
homomorphism $f:~ G \rightarrow K_n$,
\begin{align*}
\E\big(t(G, \Kpm)\big) &= \E_w\Big(\E_f\big(\hom(G, K_n^w,
f)\big)\Big) = \E_f\Big(\E_w\big(\hom(G, K_n^w, f)\big)\Big)\\
&= \Pr(f\text{ is even}).
\end{align*}
If $q(G) = \infty$ we are done. Otherwise we have
\begin{equation*}
\Pr(f\text{ is even}) \le \Pr\Big( \big|V(G)\big| - \big| f(V(G))
\big| \ge q(G) \Big) = O(n^{-q(G)}).
\end{equation*}
On the other hand, consider an even homomorphism $g:~ G \rightarrow
K_n$ with $r(g) = q(G)$. For each subset $H \subset V(K_n)$ of size $|H| = |g(V(G))|$ there is a permutation $\sigma_H$ on $V(K_n)$ that maps $g(V(G))$ bijectively to $H$. Then $f_H = \sigma(g(x))$ is also an even homomorphism, and clearly $f_{H_1} \neq f_{H_2}$ unless $H_1 = H_2$. Thus there are at least ${n
\choose |g(V(G)|}$ different even homomorphisms $f:~ G \to K_n$.
Therefore
\begin{align*}
\Pr(f\text{ is even}) &\ge \Pr(f = f_H \text{ for some $H$}) =
\binom{n}{|g(V(G))|} \Big/ n^{|V(G)|}\\
&= \Omega(n^{-q(G)}).
\end{align*}
\end{proof}

Now let us turn to the proof of Theorem \ref{evenhalf}. Assume that
$G$ is positive, then the random variable $X = t(G, \Kpm)$ is
nonnegative. Applying the Cauchy-Schwartz inequality to $X^{1/2}$ and
$X^{3/2}$ we get that
\begin{equation} \label{Holder}
\E(X) \cdot \E(X^3) \ge \E(X^2)^2.
\end{equation}
Here
\begin{equation*}
\E(X^k) = \E\big( t(G, \Kpm)^k \big) \mathop{=}^{\eqref{EQ:PROD}}
\E\big( t(G^k, \Kpm) \big) = \Theta\big( n^{-q(G^k)} \big),
\end{equation*}
so \eqref{Holder} shows that $n^{-q(G)} \cdot n^{-q(G^3)} =
\Omega\big(n^{-2 q(G^2)}\big)$, thus $q(G) + q(G^3) \le 2 q(G^2)$.
Hence
\begin{equation} \label{equal}
4 \bar{r}(G) \mathop{=}^{\eqref{rbarmulti}} \bar{r}(G) + \bar{r}(G^3)
\mathop{\le}^{\eqref{EQ:RBAR-P}} q(G) + q(G^3) \le 2 q(G^2) \mathop{=}^{\eqref{p2G}} 2
\bar{r}(G^2) \mathop{=}^{\eqref{rbarmulti}} 4 \bar{r}(G).
\end{equation}
All expressions in \eqref{equal} must be equal, therefore $\bar{r}(G)
= q(G)$.

Finally, for an even $f:~ G \rightarrow \Ki$ with $\big|V(G)\big| -
\big|f(V(G))\big| = q(G)$, we have
\begin{equation*}
\frac{1}{2} \big|V(G)\big| \mathop{\ge}^{\eqref{EQ:INJ}}\bar{r}(G)=
q(G) = \big|V(G)\big| - \big|f(V(G))\big|,
\end{equation*}
therefore $\big|f\big(V(G)\big)\big| \ge \frac{1}{2}\big|V(G)\big|$. \qed

\section{Computational results}\label{SEC:COMP-RES}

We checked Conjecture \ref{conj} for all graphs on at most 9 vertices
using the previous results and a computer program. Starting from the
list of nonisomorphic graphs, we filtered out those who violated one
of our conditions for being a minimal counterexample. In particular
we performed the following tests:

\begin{enumerate}
\item Check whether the graph is symmetric, by
exhaustive search enumerating all possible involutions of the
vertices. If the graph is symmetric, it is not a counterexample.

\item Calculate the number of homomorphisms into graphs represented
by $1\times 1$, $2\times 2$ or $3\times 3$ matrices of small
integers. (Checking $1\times 1$ matrices is just the same as checking
whether or not the number of edges is even.) If we get a negative
homomorphism count, the graph is negative and therefore it is not a
counterexample.

\item Calculate the number of homomorphisms into graphs represented
by symbolic $3\times 3$ and $4\times 4$ matrices and perform local
minimization on the resulting polynomial from randomly chosen points.
Once we reach a negative value, we can conclude that the graph is negative.

\item Partition the vertices of the graph in such a way that two
vertices belong to the same class if and only if they produce the
same walk-tree (1-dimensional Weisfeiler--Lehman algorithm). Check
for all proper subsets of the set of classes whether their union
spans an asymmetric subgraph. If we find such a subgraph, the graph
is not a minimal counterexample: either the subgraph is not positive
and by Corollary \ref{subgraph} the original graph is not positive
either, or the subgraph is positive, and therefore we have a smaller
counterexample.

\item Consider only those homomorphisms which map all vertices in
the $i$th class of the partition into vertices $3i+1$, $3i+2$ and
$3i+3$ of the target graph represented by a symbolic matrix. If we
get a negative homomorphism count, the graph is negative by
Proposition \ref{prop:walk}. (In this case we work with a $3k\times
3k$ matrix where $k$ denotes the number of classes of the walk-tree
partition, but the resulting polynomial still has a manageable size
because we only count a small subset of homomorphisms. Note that if
one of the classes consists of a single vertex, we only need one
corresponding vertex in the target graph.)
\end{enumerate}

The tests were performed in such an order that the faster and more
efficient ones were run first, restricting the later ones to the set
of remaining graphs. For example, in step 4, we start with checking
whether any of the classes spans an odd number of edges, or whether
the number of edges between any two classes is odd. We used the
{\tt SAGE} computer-algebra system for our calculations and
rewritten the speed-critical parts in {\tt C} using {\tt nauty}
for isomorphism checking, {\tt mpfi} for interval arithmetics
and Jean-S\'ebastien Roy's {\tt tnc} package for nonlinear optimization.

Our automated tests left only one graph on 9 vertices as a possible
minimal counterexample, the graph on left:

\tikzset{vertex/.style={circle,fill,inner sep=0mm,minimum size=1mm}}
\begin{center}
\begin{tabular}{@{\extracolsep{0.2cm}}cc}
\begin{tikzpicture}[scale=0.75*sqrt(3)]
\foreach \i/\k in {A/0,B/1,C/2} \foreach \j in {0,1,2}
  \path (0.9*\j,0.9*\k) ++(\j*180+270:0.1) ++(\k*180+180:0.1) node[vertex] (\i\j) {};
\foreach \i in {A,B,C} \draw (\i0) -- (\i1) -- (\i2) -- (\i0);
\foreach \i in {0,1,2} \draw (A\i) -- (B\i) -- (C\i) -- (A\i);
circle(0.05cm);
\end{tikzpicture}&
\begin{tikzpicture}[scale=0.75*sqrt(3)]
\foreach \i/\k in {A/0,B/1,C/2} \foreach \j in {0,1,2,3,4}
  \path (0.9*\j,0.9*\k) ++(\j*180+270:0.1) ++(\k*180+180:0.1) node[vertex] (\i\j) {};
\foreach \i in {A,B,C} \draw (\i0) -- (\i1) -- (\i2) -- (\i3) --
(\i4) -- (\i0); \foreach \i in {0,1,3,4} \draw (A\i) -- (B\i) --
(C\i) -- (A\i); \draw (A2) -- (B2) -- (C2); \draw[dashed] (C2) --
(A2); circle(0.05cm);
\end{tikzpicture}\\
$G_1$&$H$\\
\end{tabular}
\end{center}

The non-positivity of this graph was checked manually by counting the
number of homomorphisms into the graph on the right (where the dashed
edge has weight $-1$ and all other edges have weight $1$). This
leaves only the following three of the 12\,293\,435 graphs on at most
10 vertices as candidates for a minimal counterexample:

\begin{center}
\begin{tabular}{@{\extracolsep{0.2cm}}ccc}
\begin{tikzpicture}
\pgfmathsetmacro{\r}{sqrt(3-sqrt(5))*sqrt(27/10)} \foreach \i in
{0,...,4} \node[vertex] (A\i) at (90+72*\i:\r) {}; \foreach \i in
{0,...,4} \node[vertex] (B\i) at (90+72*\i:1.0cm) {}; \foreach \i in
{0,...,4} {
  \pgfmathtruncatemacro{\j}{mod(\i+1,5)}
  \draw (A\i) -- (A\j) (A\i) -- (B\j) (B\i) -- (A\j) (B\i) -- (B\j);
}
\end{tikzpicture}&
\begin{tikzpicture}
\foreach \i/\j in {0/1,1/0,2/2,3/0,4/2,5/1} \node[vertex] (A\i) at (0.6*\i,{1.5*sqrt(3)-0.15*\j}) {};
\foreach \i in {0,...,3} \node[vertex] (B\i) at (\i,0) {};
\draw (A0) -- (A1) -- (A3) -- (A5) -- (A4) -- (A2) -- (A0);
\foreach \i in {0,...,5} \foreach \j in {0,...,3} \draw (A\i) -- (B\j);
\end{tikzpicture}&
\begin{tikzpicture}
\foreach \i in {0,...,4} \node[vertex] (A\i) at (0.75*\i,0) {};
\foreach \i in {0,...,4} \node[vertex] (B\i) at (0.75*\i,{1.5*sqrt(3)}) {};
\draw (A0) -- (B1) (A0) -- (B2) (A0) -- (B3) (A0) -- (B4);
\draw (A1) -- (B0) (A1) -- (B2) (A1) -- (B3) (A1) -- (B4);
\draw (A2) -- (B0) (A2) -- (B1) (A2) -- (B3) (A2) -- (B4);
\draw (A3) -- (B0) (A3) -- (B1) (A3) -- (B2) (A3) -- (B4);
\draw (A4) -- (B0) (A4) -- (B1) (A4) -- (B2) (A4) -- (B3);
\end{tikzpicture}\\
$G_2$&$G_3$&$G_4$\\
\end{tabular}
\end{center}

Note that all three graphs are regular, as is the case for all remaining
graphs on 11 vertices. We have found step 5 of the algorithm quite
effective at excluding graphs with nontrivial walk-tree partitions.

\medskip

{\bf Acknowledgement.} The conjecture in this paper was the subject
of a research group at the American Institute of Mathematics workshop
``Graph and hypergraph limits", Palo Alto, CA,  August 15--19, 2011.
We are grateful for the inspiration from all those who took part in
the discussions of this research group, in particular to Sergey Norin
and Oleg Pikhurko.

Further research on the topic of this paper was supported by ERC
Grant No.~227701 and NSF under agreement No. DMS-0835373. Any
opinions and conclusions expressed in this material are those of the
authors and do not necessarily reflect the views of the NSF or of the
ERC.

\end{document}